\documentclass[11pt,reqno]{amsart}
\usepackage{esint}
\usepackage{enumerate}
\usepackage{enumitem}
\usepackage{mathrsfs}
\usepackage{url}
\usepackage{cite}
\usepackage{comment}
\usepackage{amsfonts, amssymb, amscd, amsthm, bm, cancel,amsmath}
\usepackage{amsrefs}
\usepackage[
linktocpage=true,colorlinks,citecolor=magenta,linkcolor=blue,urlcolor=magenta]{hyperref}
\newtheorem{proposition}{Proposition}[section]
\newtheorem{theorem}[proposition]{Theorem}
\newtheorem{lemma}[proposition]{Lemma}
\newtheorem{corollary}[proposition]{Corollary}

\newtheorem{assumption}[proposition]{Assumption}
\theoremstyle{remark}
\newtheorem{remark}[proposition]{Remark}

\numberwithin{equation}{section}
\allowdisplaybreaks

\begin{document}
\title[Contraction of hypersurfaces with positive sectional curvature]{Contraction of hypersurfaces with positive sectional curvature in hyperbolic space}

\address{School of Mathematical Sciences, University of Science and Technology of China, Hefei 230026, P.R. China}
\author[T. Luo]{Tianci Luo}
\email{\href{mailto:Luo_tianci@mail.ustc.edu.cn}{Luo\_tianci@mail.ustc.edu.cn}}
\author[Y. Wei]{Yong Wei}
\email{\href{mailto:yongwei@ustc.edu.cn}{yongwei@ustc.edu.cn}}
\author[R. Zhou]{Rong Zhou}
\email{\href{mailto:zhourong@mail.ustc.edu.cn}{zhourong@mail.ustc.edu.cn}}

\date{\today}
\subjclass[2020]{53C42; 53E10}
\keywords{contracting curvature flows, positive sectional curvature, hyperbolic space}

\begin{abstract}
	We study contracting curvature flows of compact hypersurfaces with positive sectional curvature in hyperbolic space $\mathbb{H}^{n+1}$. The speed is assumed to be homogeneous of degree one in the principal curvatures and to satisfy certain conditions. This class of flows includes the $k$th mean curvature flow as a special case. We show that if the initial hypersurface has positive sectional curvature, then this property is preserved along the flow, and the evolving hypersurface contracts to a round point in finite time. 
\end{abstract}

\maketitle
\tableofcontents

\section{Introduction}

Let $X_0:M^n\to\mathbb{H}^{n+1}$ be a smooth embedding such that $M_0=X_0(M)$ is a closed smooth hypersurface in the hyperbolic space $\mathbb{H}^{n+1}$. We consider the contracting curvature flow in hyperbolic space, which is a family of smooth immersions $X:M^n \times [0,T)\to \mathbb{H}^{n+1}$ satisfying the evolution equation
\begin{equation}
	\label{equ-f}
	\begin{cases}
		\begin{aligned}
			\dfrac{\partial}{\partial t}X(x,t) & = -F(x,t)\,\nu(x,t), \\
			X(\cdot,0) & = X_0,
		\end{aligned}
	\end{cases}
\end{equation}
where $\nu(x,t)$ is the unit outward normal of $M_t:=X(M,t)$, and $F$ is a smooth symmetric function of the principal curvatures $\kappa=(\kappa_1,\cdots,\kappa_n)$ of the hypersurfaces.  When $F=H$ is the mean curvature, the flow \eqref{equ-f} reduces to the mean curvature flow. 

There is a substantial literature on contracting curvature flows \eqref{equ-f} for hypersurfaces in Euclidean space. In the foundational work \cite{Hui86}, Huisken proved that any compact strictly convex hypersurface in Euclidean space evolving by mean curvature flow becomes spherical as it shrinks to a point. This result was later generalized by Chow to the flow by the $n$th root of the Gauss curvature \cite{chow85} and by the square root of the scalar curvature \cite{chow87}, although in the latter case an additional assumption beyond convexity is required. Andrews \cite{And94,And07} considered a broad class of speed functions $F$ that are homogeneous of degree one in the principal curvatures and satisfy natural concavity conditions. In particular, it is proved that the $k$th mean curvature flow
\begin{equation}\label{s1.kmcf}
\frac{\partial }{\partial t}X=-E_k^{1/k}\nu,\qquad k=1,\cdots,n    
\end{equation}
evolves any strictly convex initial hypersurface in Euclidean space to a round point, where $E_k=\binom{n}{k}^{-1}\sigma_k(\kappa)$ denotes the normalized $k$th mean curvature of the hypersurface. The special case of surfaces in $\mathbb{R}^3$ was studied in \cite{And10,schn05}. For speed functions $F$ with higher homogeneity, the analysis becomes much more delicate. The case that is best understood so far is the flow by powers of the Gauss curvature, $F=K^{\alpha}$ with $\alpha\geq \frac{1}{n+2}$; see \cite{And96,And99,And00,AGN16,BCD17,CD16,GN17,Tso85}. For flows by powers of general $k$th mean curvature, a strong curvature pinching condition on the initial hypersurface is typically required to ensure that a closed convex hypersurface shrinks to a round point \cite{AS10,AM12,Sch05,Sch06}.

For curvature flows \eqref{equ-f} in the sphere $\mathbb{S}^{n+1}$, Huisken \cite{Hui87} proved that for any initial hypersurface satisfying a certain quadratic curvature pinching condition, the mean curvature flow either contracts the hypersurface to a point in finite time or evolves for all time to a smooth totally geodesic hypersurface. This quadratic pinching condition was refined recently in \cite{lan25}.  Gerhardt \cite{Ger15} showed that if the speed function $F$ is concave and inverse concave with respect to the principal curvatures, the flow \eqref{equ-f} contracts any strictly convex hypersurface in the sphere to a round point in finite time. This includes the $k$th mean curvature flow \eqref{s1.kmcf} as a special case. The two dimensional case of the flow in $\mathbb{S}^3$ has also been studied by Andrews \cite{And02},  McCoy \cite{McC18} and Hu-Li-Wei-Zhou \cite{HLWZ20}.

The understanding of the flow \eqref{equ-f} in hyperbolic space $\mathbb{H}^{n+1}$ remains less complete. Besides strict convexity, there are several other convexity conditions for hypersurfaces in hyperbolic space, defined pointwise in terms of the principal curvatures $\kappa_i$:
\begin{itemize}
\item $h$-convexity, if $\kappa_i>1$ for all $i=1,\cdots,n$;
\item $\kappa_iH>n$ for all $i=1,\cdots,n$;
\item positive sectional curvature, if $\kappa_i\kappa_j>1$ for all $i\neq j$;
\item positive Ricci curvature, if $\kappa_i(H-\kappa_i)>n-1$ for all $i=1,\dots,n$;
\item strict convexity, if $\kappa_i>0$ for all $i=1,\cdots,n$.
\end{itemize}
Many results on curvature flows in hyperbolic space have been obtained under the strongest assumption of $h$-convexity. Andrews \cite{And9402} considered a broad class of fully nonlinear flows, excluding the mean curvature flow, and proved that any $h$-convex initial hypersurface shrinks to a round point. Yu \cite{Yu16} studied the flow \eqref{equ-f} for $h$-convex hypersurfaces in hyperbolic space, for a general class of speed functions homogeneous of degree one, using arguments similar to those employed by Gerhardt \cite{Ger15} for curvature flows in the sphere. For the mean curvature flow, however, weaker assumptions are sufficient. Huisken \cite{Hui8602} proved that a compact hypersurface in $\mathbb{H}^{n+1}$ contracts to a round point, provided that its principal curvatures satisfy the condition $\kappa_iH>n$ for all $i=1,\dots,n$. This condition was later weakened to positive Ricci curvature by Andrews and Chen \cite{AC17}. In the special case of surfaces in $\mathbb{H}^3$, Andrews and Chen \cite{AC17} also studied the flow \eqref{equ-f} for surfaces with positive scalar curvature $R=2(K-1)>0$, and showed that both the mean curvature flow and the scalar curvature flow deform such surfaces to a round point. The flows by powers of the mean curvature and scalar curvature for surfaces with positive scalar curvature in $\mathbb{H}^3$ were further studied by Hu-Li-Wei-Zhou \cite{HLWZ20}. For flows with other speed, Xu \cite{xu10} showed that harmonic mean curvature flow deforms strictly convex hypersurfaces in the hyperbolic  space to a round point.  Chen and Huang \cite{CH24}  proved that the flow by powers of the Gauss curvature with $\alpha>\frac{1}{n+2}$ deforms strictly convex hypersurfaces in the sphere or hyperbolic space to a round point.

Recently, Andrews, Chen, and the second author \cite{ACW21} studied the volume-preserving curvature flows in hyperbolic space. They proved that if the initial hypersurface has positive sectional curvature, then the positivity of sectional curvature is preserved along a large class of volume preserving flows, including the volume preserving $k$th mean curvature flow for all $k=1,\cdots,n$. As an application, they established certain Alexandrov–Fenchel inequalities for hypersurfaces with positive sectional curvature in hyperbolic space. 

Motivated by these works, in this paper we study the contracting curvature flow \eqref{equ-f} for hypersurfaces with positive sectional curvature in hyperbolic space $\mathbb{H}^{n+1}$. Before we state our result, we impose the following assumptions on the function $F$.

\begin{assumption}	\label{assumption}
$F:\Gamma_+\to (0,\infty)$ is a smooth symmetric function defined on the positive cone
	\begin{equation*}
		\Gamma_+ := \{ \kappa=(\kappa_1,\kappa_2,\cdots,\kappa_n)\in\mathbb{R}^n:\kappa_i>0,\ \forall\ i=1,2,\cdots,n \},
	\end{equation*}
	and satisfies the following properties:
	\begin{enumerate}
		\item[{\rm (i)}] $F$ is strictly increasing in each variable, homogeneous of degree one, and normalized by $F(1,1,\cdots,1)=1$;
		\item[{\rm (ii)}] For all $(y_1,y_2,\dots,y_n)\in\mathbb{R}^n$,
		\begin{equation}
			\sum\limits_{k,\ell}\ddot{F}^{k\ell}y_ky_{\ell} \geq F^{-1} \left(\sum\limits_{k=1}^n \dot{F}^k y_k \right)^2 - \sum\limits_{k=1}^n \dfrac{\dot{F}^k}{\kappa_k}y_k^2,
			\label{equ-property3}
		\end{equation}
        where $\dot{F}^k=\frac{\partial F}{\partial \kappa_k}$ and $\ddot{F}^{k\ell}=\frac{\partial^2F}{\partial\kappa_k\partial\kappa_\ell}$. 
	\end{enumerate}
\end{assumption}

In section \ref{sec.3}, we will describe the properties of functions satisfying Assumption \ref{assumption} in more details. In particular, we will see that such functions must be invere concave (Lemma \ref{lem:inverse-concave}); the condition \eqref{equ-property3} is equivalent to that $\log F$ is convex as a function of $(\log \kappa_1,\cdots,\log\kappa_n)$ (Lemma \ref{lem:logF-convex}); Using the log-convexity property, we can derive a useful lower bound $F\geq \left(\prod_{i=1}^n \kappa_i\right)^{1/n}$ (Proposition \ref{lem:gm-lower-bound}).  
\begin{remark}
Examples of functions satisfying Assumption \ref{assumption} include 
\[F=\left(\frac{1}{n}\sum_{i=1}^n \kappa_i^r\right)^{\frac{1}{r}},\quad r>0\]
and 
\[F=E_k^{\frac{1}{k}},\quad k=1,2,\dots,n\] 
where $E_k$ is the normalized $k$th elementary symmetric function defined by
\begin{equation*}
			E_k(\kappa) = {n\choose k}^{-1}\sigma_k(\kappa) = {n\choose k}^{-1}\sum\limits_{1\leq i_1 <\cdots < i_k \leq n} \kappa_{i_1}\kappa_{i_2}\cdots\kappa_{i_k}.
\end{equation*}
Moreover, assume that $F^{(1)},\dots,F^{(m)}$ satisfy Assumption \ref{assumption}, and let $\theta_\alpha>0$ and $\sum_{\alpha=1}^m \theta_\alpha=1$.Then
\[
F(\kappa):=\prod_{\alpha=1}^m \big(F^{(\alpha)}(\kappa)\big)^{\theta_\alpha}
\]
also satisfies Assumption \ref{assumption}.
\end{remark}

The main result of this paper is the following convergence theorem for the flow \eqref{equ-f}.

\begin{theorem}\label{thm-main}
	Let $X_0:M^n \to \mathbb{H}^{n+1}$ be a smooth, closed hypersurface in hyperbolic space $\mathbb{H}^{n+1}$ ($n\geq 3$) with positive sectional curvature. Assume that $F$ satisfies Assumption \ref{assumption}. Then the flow \eqref{equ-f} has a unique smooth solution $M_t$ on a maximal time interval $[0,T)$ with $T<\infty$. Moreover, $M_t$ has positive sectional curvature for every $t\in[0,T)$, and converges smoothly to a point $p\in\mathbb{H}^{n+1}$ as $t\to T$.
    
	If, in addition, $F$ is strictly concave, or $F=\frac{1}{n}H$, then the solution is asymptotic to a shrinking geodesic sphere as $t\to T$. More precisely, let $\Theta(t,T)$ denote the spherical solution of \eqref{equ-f} with extinction time $T$. Using geodesic polar coordinates centered at $p$, write $M_t$ as the graph of a function $u(\cdot,t)$ over $\mathbb{S}^n$. Then the rescaled function $u(\cdot,t)\Theta(t,T)^{-1}$ converges to $1$ in $C^\infty(\mathbb{S}^n)$ as $t\to T$. 
\end{theorem}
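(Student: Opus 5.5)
The proof splits into four steps: short-time existence, preservation of positive sectional curvature, convergence to a point, and roundness under the extra hypotheses.

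\textbf{Short-time existence.} Positive sectional curvature ($\kappa_i\kappa_j>1$ for $i\neq j$, $n\ge 3$) forces strict convexity, so $\kappa\in\Gamma_+$. Since $F$ is strictly increasing, the flow \eqref{equ-f} is strictly parabolic there, and standard theory gives a unique smooth solution on a maximal interval $[0,T)$.

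\textbf{Preservation of positive sectional curvature.} This is the heart of the argument and the step I expect to be hardest. I would apply the two-point/tensor maximum principle in the style of Andrews, as in \cite{ACW21}, to the function
\[
Z(x,t)=\min_{i\neq j}\left(\kappa_i\kappa_j-1\right).
\]
Equivalently, one can use the tensor $T(v,w)=h(v,v)h(w,w)-h(v,w)^2-1$ on orthonormal pairs.

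At a first point and pair $(e_1,e_2)$ where $\kappa_1\kappa_2=1$, I would compute the evolution of $\kappa_1\kappa_2$. This produces:
\begin{itemize}
\item the diffusion term;
\item a zero-order term, from $\dot F^{kl}h_{km}h_{ml}$ and the hyperbolic curvature $-1$ contributions;
\item gradient terms $\ddot F^{kl,pq}\nabla_1 h_{kl}\nabla_1 h_{pq}$ and $\ddot F^{kl,pq}\nabla_2 h_{kl}\nabla_2 h_{pq}$;
\item the mixed term $-2\dot F^{kl}\nabla_k h_{11}\nabla_l h_{22}$.
\end{itemize}
The first- and second-order minimality conditions then supply the usual extra term $\sum_{p>2}\dot F^{k}(\cdots)^2/(\kappa_2-\kappa_p)$.

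The zero-order term should be nonnegative. Using homogeneity, it reduces to something like $F(\kappa_1+\kappa_2)-\dot F^k\kappa_k^2\,\cdot\,$(stuff)$-\dot F^k$, which must be checked at $\kappa_1\kappa_2=1$ with $\kappa_1\le\kappa_2\le\kappa_p$.

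The gradient terms are controlled with Assumption~\ref{assumption}(ii), that is, the log-convexity inequality \eqref{equ-property3}, applied to the vectors $y_k=\nabla_1 h_{kk}$ and $\nabla_2 h_{kk}$. The $F^{-1}(\dot F^k y_k)^2$ piece pairs with the first-order condition $\kappa_2\nabla h_{11}+\kappa_1\nabla h_{22}=0$. The $-\dot F^k y_k^2/\kappa_k$ piece is absorbed by the $1/(\kappa_2-\kappa_p)$ terms together with the inverse concavity from Lemma~\ref{lem:inverse-concave}.

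\textbf{Convergence to a point.} Once $\kappa_i\kappa_j\ge 1$ is preserved, I would argue as follows.
\begin{enumerate}
\item Proposition~\ref{lem:gm-lower-bound} gives $F\ge K^{1/n}\ge 1$, so the flow moves inward at unit speed at least. Comparison with shrinking spheres then gives $T<\infty$, and the inradius tends to $0$.
\item A Tso-type argument on $F/(\text{support function}-c)$ gives a curvature bound while the inradius stays positive.
\item A pinching estimate $\kappa_{\max}\le C\kappa_{\min}$ comes from the preserved sectional curvature bound combined with the upper bound on $F$.
\item With pinching in hand, Krylov--Safonov and Schauder estimates give smoothness, so $M_t$ shrinks to a point $p$.
\end{enumerate}
The pinching step will need care. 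I would first try a maximum principle for $\kappa_{\max}/F$, using that the hyperbolic terms are favorable.

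\textbf{Roundness.} If $F$ is strictly concave or $F=H/n$, I would show $|A|^2-nF^2\le C F^{2-\sigma}$ as in Andrews \cite{And94} and Huisken \cite{Hui8602}. The lower-order hyperbolic terms become negligible after rescaling, since the curvature blows up. Rescaling about $p$ in polar coordinates then gives $u/\Theta\to 1$ in $C^\infty$ by standard interpolation.
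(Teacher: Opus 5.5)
\textbf{Gap: the uniform pinching estimate.} Your overall architecture matches the paper: a frame-bundle maximum principle for sectional curvature as in \cite{ACW21}, Tso's bound on $F/(\chi-c)$, contraction to a point, then Stampacchia iteration. Your use of $F\ge(\prod_i\kappa_i)^{1/n}>1$ is also a neat shortcut for the lower speed bound. But the step everything afterwards rests on is not established. That step is a pinching estimate $\kappa_n\le C\kappa_1$ with $C$ \emph{independent of $t\in[0,T)$}, and neither of your two routes gives it.

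Combining $\kappa_i\kappa_j>1$ with the Tso bound $F\le C(\rho)$ only gives a ratio bound that depends on the inradius $\rho$, and this blows up as $t\to T$. The problem is structural. The condition $\kappa_i\kappa_j>1$ is not scale invariant: after rescaling by $\Theta$ it reads $\tilde\kappa_i\tilde\kappa_j>\Theta^2\to0$. So it carries no pinching information in the blow-up limit.

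Your fallback, a maximum principle for $\kappa_{\max}/F$, also fails. By \eqref{s2.evl-g} and \eqref{s2.evlh2}, the Weingarten map has reaction term $(\sum_k\dot F^k\kappa_k^2+\sum_k\dot F^k)\kappa-2F$. Together with \eqref{equ-psi}, the reaction term of $\kappa_n/F$ is $2(\kappa_n\sum_k\dot F^k/F-1)$. This is $\ge0$, strictly so away from umbilic points, because $F=\sum_k\dot F^k\kappa_k\le\kappa_n\sum_k\dot F^k$. So the hyperbolic terms work against you, not for you. In addition, the standard control of the gradient terms for $\kappa_{\max}/F$ uses concavity of $F$. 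That is not assumed in the first part of the theorem; for example, the power means with $r>1$ are convex.

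\textbf{Why it matters and what is missing.} Uniform pinching is indispensable later in two places. First, $\rho_-(t)\to0$ alone does not rule out collapse onto a lower-dimensional convex set. The paper gets $\rho_+\le C\rho_-$ from pinching, via the Klein model and Andrews' Euclidean result. Second, the Stampacchia iteration for \eqref{equ-roundness} uses pinching to control quadratic terms by $\psi_\sigma$.

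The missing idea is to run your frame-bundle maximum principle not on $h(e_1,e_1)h(e_2,e_2)-h(e_1,e_2)^2-1$ but on
$G=h(e_1,e_1)h(e_2,e_2)-h(e_1,e_2)^2-1-\varepsilon F^2$.
\begin{itemize}
\item The $-1$ absorbs the hyperbolic terms. At a minimum point, where $G\ge0$, the zero-order part equals $2\sum_k\dot F^k\kappa_k^2G+2\sum_k\dot F^k(\kappa_k-\kappa_1)(\kappa_k-\kappa_2)+2\varepsilon F^2\sum_k\dot F^k$, which is $\ge0$.
\item The gradient terms close using \eqref{equ-property3} and Corollary~\ref{lem.cond-ii}, which is sharper than the inverse concavity you invoke. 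They also use the critical condition $\kappa_2\nabla_i h_{11}+\kappa_1\nabla_i h_{22}=2\varepsilon F\nabla_i F$, and $F\ge 2\dot F^j\kappa_j$ for $j=1,2$; the latter is where $n\ge3$ enters.
\item Positivity of $G$ gives positive sectional curvature for free.
\item Since $\varepsilon F^2$ scales like $\kappa_1\kappa_2$, the inequality $\kappa_1\kappa_2\ge\varepsilon F^2\ge\varepsilon(\prod_i\kappa_i)^{2/n}$ from Proposition~\ref{lem:gm-lower-bound} survives blow-up. It yields $\kappa_n\le C(n,\varepsilon)\kappa_1$.
\end{itemize}

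\textbf{Secondary point: regularity.} Since $F$ need not be concave, Krylov--Safonov plus Schauder does not by itself give $C^{2,\alpha}$ bounds. The paper relies on inverse concavity (Lemma~\ref{lem:inverse-concave}) and the argument of \cite{AW18}.
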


It is well known that $E_k^{\frac{1}{k}}(\kappa)$ for $2\leq k\leq n$ and $\bigl( \frac{1}{n}\sum_{i=1}^n \kappa_i^r \bigr)^{\frac{1}{r}}$ for $0< r<1$ are both strictly concave in $\Gamma_+$ (see \cite[Theorem 3.2]{Ger15} and \cite[Lemma 3.2]{Yu16}) and hence satisfy all conditions of Theorem \ref{thm-main}. Moreover, if $F_1$ and $F_2$ are two symmetric functions that are strictly concave, then $F_1^{\alpha_1}F_2^{\alpha_2}$ with $\alpha_1+\alpha_2=1$, $\alpha_1,\alpha_2>0$ is also strictly concave. Consequently, many curvature functions satisfy the conditions of Theorem \ref{thm-main}. Our result generalizes Andrews’ \cite{And07} classical theorem for contracting curvature flows in Euclidean space and Gerhardt’s \cite{Ger15} corresponding result in the sphere. 
		
We state the result for the $k$th mean curvature flow separately as follows.
\begin{corollary}
Let $X_0:M^n \to \mathbb{H}^{n+1}$ be a smooth closed hypersurface in the hyperbolic space $\mathbb{H}^{n+1}$ ($n\geq 3$) with positive sectional curvature. Then there exists a unique solution $M_t$ to the $k$th mean curvature flow
\[\frac{\partial }{\partial t}X=-E_k^{1/k}\nu,\qquad k=1,\cdots,n  \]
on a maximal time interval $[0,T)$ with $T<\infty$. Moreover, $M_t$ has positive sectional curvature for each $t\in[0,T)$, and converges smoothly to a round point $p\in\mathbb{H}^{n+1}$ as $t\to T$. 
\end{corollary}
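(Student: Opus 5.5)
The corollary will follow by specializing Theorem \ref{thm-main} to $F=E_k^{1/k}$. Two things need to be checked. First, that $E_k^{1/k}$ satisfies Assumption \ref{assumption} for every $k=1,\dots,n$. Second, that for each $k$ one of the extra hypotheses of the second part of the theorem holds, namely strict concavity or $F=\frac1n H$.

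For Assumption \ref{assumption}, part (i) is standard. On $\Gamma_+$ the function $E_k^{1/k}$ is smooth, positive and strictly increasing in each $\kappa_i$, since $\partial\sigma_k/\partial\kappa_i=\sigma_{k-1}(\kappa|i)>0$. It is homogeneous of degree one and satisfies $E_k(1,\dots,1)=1$. For part (ii) I will use Lemma \ref{lem:logF-convex}, which reduces \eqref{equ-property3} to convexity of $\log E_k^{1/k}$ as a function of $x_i=\log\kappa_i$. Now
\[
\sigma_k(e^{x})=\sum_{|I|=k}e^{\sum_{i\in I}x_i}
\]
is a sum of exponentials of linear functions. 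By Hölder's inequality, such a log-sum-exp is convex, so $\frac1k\log E_k$ is convex in $x$. This agrees with the Remark, which already lists $E_k^{1/k}$ as an example, and so I may simply cite it.

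Next come the additional hypotheses. For $k=1$ we have $E_1=\frac1n H$, which is one of the allowed cases. For $2\le k\le n$, the function $E_k^{1/k}$ is strictly concave on $\Gamma_+$ by the result cited right after Theorem \ref{thm-main} (\cite[Theorem 3.2]{Ger15}, \cite[Lemma 3.2]{Yu16}).

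With both checks done, Theorem \ref{thm-main} applies. It gives a unique smooth solution on a maximal interval $[0,T)$ with $T<\infty$ and positive sectional curvature preserved. The rescaled radial function $u\,\Theta^{-1}$ converges to $1$ in $C^\infty$, and this is exactly what ``converges to a round point $p$'' means. There is no real obstacle here. The only point needing care is that strict concavity fails for $k=1$, so that case has to go through the $F=\frac1nH$ alternative rather than the concavity one.
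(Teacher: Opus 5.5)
Your proposal is correct and follows the paper's own route: specialize Theorem \ref{thm-main} to $F=E_k^{1/k}$, and check Assumption \ref{assumption} through the log-sum-exp convexity of $\log E_k^{1/k}(e^{z})$, exactly as in Section \ref{sec.3}. The second part of the theorem then applies via strict concavity of $E_k^{1/k}$ for $2\le k\le n$ and via the alternative $F=\frac1n H$ for $k=1$.
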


\begin{remark}
When $n=2$, positivity of sectional curvature is equivalent to positivity of the scalar curvature, namely, $K=\kappa_1\kappa_2>1$. In this case, the corresponding results for surfaces with positive scalar curvature in $\mathbb{H}^3$ were established for the mean curvature flow ($k=1$) by Andrews-Chen \cite{AC17}, and for the Gauss curvature flow ($k=2$) by Hu-Li-Wei-Zhou \cite{HLWZ20}.
\end{remark}

To prove our main theorem, we establish a key curvature pinching estimate (Proposition \ref{s4.lem-pinc}). Motivated by \cite{ACW21}, we introduce a function on the orthonormal frame bundle $O(M)$ over $M$: given a point $x\in M$, $t\geq 0$, and an orthonormal frame $\mathbb{O}=\{e_1,\dots,e_n\}$ for $T_x M$ with respect to $g(x,t)$, we define
\begin{equation*}
	G(x,t,\mathbb{O}) = h_{(x,t)}(e_1,e_1)h_{(x,t)}(e_2,e_2) - h_{(x,t)}(e_1,e_2)^2 - 1 - \varepsilon F(x,t)^2.
\end{equation*}
Since the initial hypersurface has positive sectional curvature, one can choose $\varepsilon>0$ sufficiently small so that $G$ is positive initially. To apply the maximum principle and show that $G$ remains positive, we compute the time and spatial derivatives of $G$. This involves a rather delicate computation: using Hessian inequalities on the total space of $O(M)$, we establish the required differential inequality for the time derivative at a minimum point. The argument is closely related to that used by Andrews in proving a generalized tensor maximum principle in \cite{And07}, and also to the vector bundle maximum principles for reaction–diffusion equations by Hamilton in \cite{Ha86}. By further observing that $F\geq \left(\prod_{i=1}^n \kappa_i\right)^{1/n}$ for $F$ satisfying Assumption \ref{assumption} (see Proposition \ref{lem:gm-lower-bound}), we obtain the curvature pinching estimate 
\[\kappa_n\leq C\kappa_1\] 
along the flow \eqref{equ-f}. Once this curvature pinching estimate is established, we apply Stampacchia iteration, following Huisken \cite{Hui86} and Gerhardt \cite{Ger15}, to prove that the  flow satisfies a roundness estimate. This in turn implies that the rescaled flow converges smoothly to a geodesic sphere.

The paper is organized as follows. In Section \ref{sec2}, we collect preliminaries needed for the proof, including properties of symmetric curvature functions, the geometry of hypersurfaces in hyperbolic space $\mathbb{H}^{n+1}$, and evolution equations along the flow \eqref{equ-f}. In Section \ref{sec.3},  we describe the properties of symmetric functions satisfying Assumption \ref{assumption}.  In Section \ref{sec4}, we apply the maximum principle to prove the crucial curvature pinching estimate along the flow \eqref{equ-f}. In Section \ref{sec5}, we show that the flow contracts to a point in finite time, and a properly rescaled solution converges smoothly to a geodesic sphere.

\section{Preliminaries}
\label{sec2}
In this section, we recall properties of symmetric curvature functions, the geometry of hypersurfaces in the hyperbolic space $\mathbb{H}^{n+1}$, and evolution equations along the flow \eqref{equ-f}.

\subsection{Symmetric functions}\label{sec.2-1}
For a smooth symmetric function $F$ on $\mathbb{R}^n$, we may extend it to a smooth symmetric function on the space of symmetric matrices by setting $F(A)=F(\kappa(A))$, and we use the same notation $F$ for this extension. Here $A=(A_{ij})\in\mathrm{Sym}(n)$ is a symmetric matrix, and $\kappa(A)=(\kappa_1,\kappa_2,\cdots,\kappa_n)$ denotes the eigenvalues of $A$. We write $\dot{F}^{ij}(A)$ and $\ddot{F}^{ij,k\ell}(A)$ for the first and second derivatives of $F$ with respect to the components of its matrix argument, so that
\begin{equation*}
	\left.\dfrac{\partial}{\partial s}F(A+sB)\right|_{s=0} = \dot{F}^{ij}(A)B_{ij},\ \ \left.\dfrac{\partial^2}{\partial s^2}F(A+sB)\right|_{s=0} = \ddot{F}^{ij,k\ell}(A)B_{ij}B_{k\ell}
\end{equation*}
for any two symmetric matrices $A$, $B$. We also use the notation
\begin{equation*}
	\dot{F}^i(\kappa) = \dfrac{\partial F}{\partial \kappa_i}(\kappa),\ \ \ddot{F}^{ij}(\kappa) = \dfrac{\partial^2 F}{\partial \kappa_i \partial \kappa_j}(\kappa)
\end{equation*}
for the first and second derivatives of $F$ with respect to $\kappa$, evaluated at the eigenvalues of $A$. If $A$ is diagonal with distinct eigenvalues $\kappa=\kappa(A)$, then the first derivative of $F$ satisfies
\begin{equation*}
	\dot{F}^{ij}(A) = \dot{F}^i(\kappa)\delta_{ij},
\end{equation*}
and the second derivative of $F$ in the direction $B\in\mathrm{Sym}(n)$ can be expressed in terms of $\dot{F}$ and $\ddot{F}$ as follows (see \cite[Theorem 5.1]{And07} and \cite[Lemma 1.1]{ge96}):
\begin{equation}
	\ddot{F}^{ij,k\ell}(A)B_{ij}B_{k\ell} = \sum\limits_{i,j}\ddot{F}^{ij}(\kappa)B_{ii}B_{jj} + 2 \sum\limits_{i>j} \dfrac{\dot{F}^i(\kappa)-\dot{F}^j(\kappa)}{\kappa_i-\kappa_j}B_{ij}^2.
	\label{equ-secf}
\end{equation}
This formula remains valid, by continuity, in the case where some of the eigenvalues $\kappa_i$ coincide.

Let $F$ be a smooth, symmetric, homogeneous of degree one, monotone and concave function on $\mathbb{R}^n$. We say that $F$ strictly concave, if 
\[\ddot{F}^{ij}\xi_i\xi_j<0,\quad \forall~\xi\nsim \kappa,~\xi\neq 0,\]
or equivalently, if the multiplicity of the eigenvalue $\kappa=0$ for $D^2F(\kappa)$ is one for all $\kappa$. Note that the one-homogeneity of $F$ implies that $\kappa$ is an eigenvector of $D^2F(\kappa)$ corresponding to the eigenvalue zero. Examples of strictly concave symmetric functions include 
\[E_k^{1/k},\quad k=2,\cdots,n\qquad \text{and}\quad \left(\frac{1}{n}\sum_{i=1}^{n}\kappa_i^r\right)^{\frac{1}{r}},\quad -1\leq r<1.\]
The proof can be found in \cite[Theorem 3.2]{Ger15} and \cite[Lemma 3.2]{Yu16}) respectively. 


Another important notion for curvature functions is inverse concavity. We say that $F$ is inverse concave, if its dual function
\[
F_*(\mu_1,\dots,\mu_n):=\frac{1}{F(\mu_1^{-1},\dots,\mu_n^{-1})}
\]
is concave on $\Gamma_+$. We have the following useful estimate on inverse concave functions.
\begin{lemma}[see {\cite[Lemma 5]{AMZ13}}] Assume that $F=F(\kappa)$ is homogeneous of degree one and normalized by $F(1,\cdots,1)=1$.  If $F$ is inverse concave, then
	\begin{equation}\label{equ-inv2}
			\sum\limits_{i=1}^n \dot{F}^i \kappa_i^2 \geq F^2.
	\end{equation}
\end{lemma}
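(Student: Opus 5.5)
The plan is to use the dual function. Write $\mu_i=\kappa_i^{-1}$, so that
\[
F(\kappa)=\frac{1}{F_*(\mu)},\qquad \mu=(\kappa_1^{-1},\dots,\kappa_n^{-1}).
\]
Since $F$ is homogeneous of degree one and $F(1,\dots,1)=1$, the same holds for $F_*$. For a concave, one-homogeneous function, Euler's relation together with concavity gives the tangent-plane bound
\[
F_*(\nu)\le \sum_i \frac{\partial F_*}{\partial\mu_i}(\mu)\,\nu_i \qquad \text{for all } \nu\in\Gamma_+ .
\]
The idea is to apply this with $\nu=(1,\dots,1)$. Then $F_*(\nu)=1$, and we obtain
\[
1\le \sum_i \frac{\partial F_*}{\partial\mu_i}(\mu).
\]

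Next I express $\partial F_*/\partial\mu_i$ in terms of $\dot F^i$. Differentiating $F_*(\mu)=1/F(\mu_1^{-1},\dots,\mu_n^{-1})$ gives
\[
\frac{\partial F_*}{\partial\mu_i}=\frac{\dot F^i(\kappa)\,\mu_i^{-2}}{F(\kappa)^2}=\frac{\dot F^i\kappa_i^2}{F^2}.
\]
Substituting this into the previous inequality yields
\[
\sum_i\dot F^i\kappa_i^2\ge F^2,
\]
which is \eqref{equ-inv2}.

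The one step that needs care is the tangent-plane bound. It follows from concavity: $F_*(\nu)\le F_*(\mu)+DF_*(\mu)\cdot(\nu-\mu)$, and by Euler's relation $DF_*(\mu)\cdot\mu=F_*(\mu)$, so the constant terms cancel and leave $F_*(\nu)\le DF_*(\mu)\cdot\nu$. This argument only requires $F_*$ to be concave and differentiable on $\Gamma_+$, and both hold here.
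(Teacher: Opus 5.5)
Your proof is correct, and it is essentially the standard argument behind the cited \cite[Lemma 5]{AMZ13}; the paper itself only quotes that result without reproducing a proof. The argument runs as follows: concavity and one-homogeneity of $F_*$ give $1=F_*(1,\dots,1)\le\sum_i \frac{\partial F_*}{\partial\mu_i}(\mu)$, and the identity $\frac{\partial F_*}{\partial\mu_i}=\dot F^i\kappa_i^2/F^2$ (which the paper also computes in the proof of Lemma \ref{lem:inverse-concave}) turns this into \eqref{equ-inv2}.
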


\subsection{Hypersurfaces in the hyperbolic space}
We view $\mathbb{H}^{n+1}=\mathbb{S}^n\times[0,\infty)$ as a warped product manifold equipped with the metric
\begin{equation*}
	\overline{g} = \mathrm{d}r^2 + \sinh^2 r g_{\mathbb{S}^n},
\end{equation*}
where $r\in[0,\infty)$ is the geodesic distance from the origin point and $g_{\mathbb{S}^n}$ denotes the standard metric on $\mathbb{S}^n$.

Let $M$ be a smooth closed hypersurface in $\mathbb{H}^{n+1}$ with induced metric $g$ and outward unit normal $\nu$. Denote by $D$ and $\nabla$ the Levi-Civita connections of the ambient metric $\overline{g}$ and the induced metric $g$, respectively. In local coordinates $\{x^1,\cdots,x^n\}$ on $M$, the induced metric and the second fundamental form are given by 
\[g_{ij} = \overline{g}(\partial_i,\partial_j),\qquad h_{ij} = \langle D_{\partial_i}\nu,\partial_j\rangle.\]
Here $h=(h_{ij})$ is a symmetric $(0,2)$-tensor. The associated Weingarten map is the $(1,1)$-tensor 
\[\mathcal{W} = (h^i_j),\qquad h^i_j = g^{ik}h_{kj}.\]  
Although the matrix \((h^i_j)\) is not in general symmetric in arbitrary coordinates, \(\mathcal W\) is self-adjoint with respect to \(g\). Hence \(\mathcal W\) is diagonalizable and has real eigenvalues
$\kappa=(\kappa_1,\dots,\kappa_n)$, which are the principal curvatures of \(M\). We also write $|A|^2=h^i_j h^j_i=g^{ip}g^{jq}h_{ij}h_{pq}$ for the squared norm of the second fundamental form.

Given a smooth symmetric  function \(F(\kappa)\), we can define the corresponding curvature function on \(M\) by
\[
F(\mathcal W)=F(\kappa(\mathcal W)).
\]
Equivalently, since \(\mathcal W=g^{-1}h\) is \(g\)-self-adjoint, the matrix \(g^{-1/2}hg^{-1/2}\in \mathrm{Sym}(n)\)
is symmetric and has the same eigenvalues as \(\mathcal W\). Thus \(F\) may be viewed as an \(O(n)\)-invariant smooth function of the symmetric matrix \(g^{-1/2}hg^{-1/2}\), so the general facts for symmetric functions described above apply directly. In particular, in a local orthonormal frame, where \(g_{ij}=\delta_{ij}\), the matrix of \(\mathcal W\) is simply \((h_{ij})\), which is symmetric. We denote by \(\dot F^{ij}\) and \(\ddot F^{ij,kl}\) the first and second derivatives of \(F\) with respect to the entries of \(h_{ij}\) in such a frame.

\subsection{Evolution equations}
Along the flow \eqref{equ-f} for hypersurfaces in hyperbolic space, the following evolution equations for geometric quantities on the evolving hypersurface $M_t$ are well known. For further details, we refer the readers to \cite{And9402,AC17,ACW21} and \cite{ge06}.
\begin{lemma}
Assume that the curvature function $F$ is homogeneous of degree one in the principal curvatures. 	Along the flow \eqref{equ-f} in hyperbolic space, the induced metric $g_{ij}$, the  curvature function $F$, and the second fundamental form $h_{ij}$ satisfy the following equations:
\begin{align}
    \frac{\partial}{\partial t}g_{ij}=&-2Fh_{ij},\label{s2.evl-g}\\
		\frac{\partial}{\partial t} F = & \dot{F}^{k\ell}\nabla_k\nabla_{\ell}F + \left( \dot{F}^{ij}(h^2)_{ij}- \dot{F}^{ij}g_{ij} \right)F,  \label{equ-psi}\\
        \frac{\partial}{\partial t}h_{ij}=&\nabla_i\nabla_jF-F\left((h^2)_{ij}+g_{ij}\right),\label{s2.evl-h1}\\
        \frac{\partial}{\partial t}h_{ij}=&\dot{F}^{k\ell}\nabla_k\nabla_{\ell}h_{ij} + \ddot{F}^{k\ell,pq}\nabla_i h_{k\ell} \nabla_j h_{pq} \nonumber\\
        &\quad + \left( \dot{F}^{k\ell}(h^2)_{k\ell} + \dot{F}^{k\ell}g_{k\ell}\right)h_{ij} - 2F\left((h^2)_{ij}+ g_{ij}\right),  \label{s2.evlh2}
		\end{align}
where $(h^2)_{ij}=h_{ik}h_{\ell j}g^{k\ell}$. 
\end{lemma}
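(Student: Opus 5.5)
We verify the four evolution equations directly from the flow $\partial_t X=-F\nu$, working at a fixed time in a local orthonormal frame and using the Gauss and Codazzi equations of $\mathbb{H}^{n+1}$, whose ambient curvature is $\overline{R}_{\alpha\beta\gamma\delta}=-(\overline g_{\alpha\gamma}\overline g_{\beta\delta}-\overline g_{\alpha\delta}\overline g_{\beta\gamma})$.

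\textbf{Metric.} For \eqref{s2.evl-g} we differentiate $g_{ij}=\langle\partial_iX,\partial_jX\rangle$ in time. This gives $\partial_t g_{ij}=\langle D_i(-F\nu),\partial_j X\rangle+(i\leftrightarrow j)$. Since $\nu\perp\partial_jX$, the terms with $\nabla F$ drop out and we are left with $-F\langle D_i\nu,\partial_j\rangle-F\langle D_j\nu,\partial_i\rangle=-2Fh_{ij}$.

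\textbf{Normal.} We next show $\partial_t\nu=\nabla F$. Differentiating $|\nu|^2=1$ shows $\partial_t\nu$ is tangential. Differentiating $\langle\nu,\partial_jX\rangle=0$ gives $\langle D_t\nu,\partial_j\rangle=-\langle \nu,D_j(-F\nu)\rangle=\partial_jF$.

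\textbf{Second fundamental form, first form.} For \eqref{s2.evl-h1} we write $h_{ij}=-\langle D_iD_jX,\nu\rangle$ (equivalently $\langle D_i\nu,\partial_j\rangle$) and differentiate. Commuting $D_t$ with $D_i$ produces an ambient curvature term $\overline R(\partial_tX,\partial_i)\partial_j$. Its normal component is $-F\,\overline R(\nu,\partial_i,\partial_j,\nu)$, which equals $-Fg_{ij}$ by the hyperbolic curvature. Using $\partial_t\nu=\nabla F$, the remaining terms combine to $\nabla_i\nabla_jF-F(h^2)_{ij}$. Adding the two gives $\partial_th_{ij}=\nabla_i\nabla_jF-F((h^2)_{ij}+g_{ij})$. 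The signs must be tracked carefully against the outward-normal convention, so that a shrinking sphere with $h=\coth r\, g$ satisfies $\partial_t h=-F(h^2+g)$. We will use this sphere as a check.

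\textbf{Speed.} For \eqref{equ-psi} we apply the chain rule to $F=F(h^i_j)$, which gives $\partial_tF=\dot F^{ij}\partial_t h_{ij}-\dot F^{ij}h_{ik}h_{jl}\partial_tg^{kl}$. Here $\partial_t g^{kl}=2Fh^{kl}$ comes from \eqref{s2.evl-g}. Combining this with \eqref{s2.evl-h1} gives
\[
\dot F^{ij}\nabla_i\nabla_jF-F\dot F^{ij}((h^2)_{ij}+g_{ij})+2F\dot F^{ij}(h^2)_{ij},
\]
which is exactly \eqref{equ-psi}.

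\textbf{Second fundamental form, second form.} This is the main obstacle. We start from \eqref{s2.evl-h1} and express $\nabla_i\nabla_jF$ as
\[
\nabla_i\nabla_jF=\dot F^{kl}\nabla_i\nabla_jh_{kl}+\ddot F^{kl,pq}\nabla_ih_{kl}\nabla_jh_{pq}.
\]
We then commute derivatives to turn $\nabla_i\nabla_jh_{kl}$ into $\nabla_k\nabla_lh_{ij}$, which is Simons' identity. By Codazzi (valid since $\mathbb H^{n+1}$ has constant curvature), $\nabla_i\nabla_jh_{kl}=\nabla_i\nabla_kh_{jl}$. Commuting $\nabla_i$ and $\nabla_k$ brings in the intrinsic curvature, which by Gauss is $R_{ikpq}=h_{ip}h_{kq}-h_{iq}h_{kp}-(g_{ip}g_{kq}-g_{iq}g_{kp})$. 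After a second application of Codazzi we obtain
\[
\nabla_i\nabla_jh_{kl}=\nabla_k\nabla_lh_{ij}+h_{ij}(h^2)_{kl}-h_{kl}(h^2)_{ij}+h_{il}(h^2)_{jk}-h_{jk}(h^2)_{il}+h_{ij}g_{kl}-h_{kl}g_{ij}+h_{jk}g_{il}-h_{il}g_{jk},
\]
with the signs to be confirmed. We then contract with $\dot F^{kl}$. Since $\dot F$ commutes with $h$, the terms $h_{il}(h^2)_{jk}-h_{jk}(h^2)_{il}$ and $h_{jk}g_{il}-h_{il}g_{jk}$ cancel after contraction. Euler's relation $\dot F^{kl}h_{kl}=F$ turns the remaining terms into
\[
\dot F^{kl}\nabla_k\nabla_lh_{ij}+(\dot F^{kl}(h^2)_{kl}+\dot F^{kl}g_{kl})h_{ij}-F((h^2)_{ij}+g_{ij}).
\]
Adding the term $-F((h^2)_{ij}+g_{ij})$ from \eqref{s2.evl-h1} gives the coefficient $-2F$ in \eqref{s2.evlh2}. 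The delicate part is this bookkeeping of signs in the commutator; we will cross-check it by confirming that geodesic spheres satisfy the final identity.
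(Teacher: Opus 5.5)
Your derivation is correct. It is the standard computation (first variation of $g$, $\nu$ and $h$, then the commuted Hessian identity via Gauss--Codazzi and Euler's relation) that the paper does not carry out itself but calls well known and defers to \cite{And9402,AC17,ACW21,ge06}.

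There are two slips, neither of which affects the conclusion:
\begin{enumerate}
\item In your chain rule for $\partial_tF$, the metric term should be $+\dot F^{ij}h_{jl}\,\partial_t g^{il}$ in an orthonormal frame, not $-\dot F^{ij}h_{ik}h_{jl}\,\partial_t g^{kl}$, which would produce a cubic term of the wrong sign. Your next line already uses the correct value $+2F\dot F^{ij}(h^2)_{ij}$.
\item In your commuted identity, the last two ambient terms should read $+h_{il}g_{jk}-h_{jk}g_{il}$; you can confirm this by antisymmetrizing in $(i,j)$ and comparing with the Ricci identity. The sign does not matter, because these terms cancel after contraction with $\dot F^{kl}$ either way.
\end{enumerate}
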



\section{Functions satisfying Assumption \ref{assumption}}\label{sec.3}

In this section, we describe the properties of symmetric functions satisfying Assumption \ref{assumption}. 

\subsection{Properties of $F$}
We first show that such functions must be inverse concave. 

\begin{lemma}\label{lem:inverse-concave}
Let $F:\Gamma_+\to (0,\infty)$ be a smooth symmetric function. Assume that $F$ is homogeneous of degree one and strictly increasing in each variable, and satisfies the condition \eqref{equ-property3} of Assumption \ref{assumption}. Then $F$ is inverse concave.
\end{lemma}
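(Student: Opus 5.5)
The plan is to compute the Hessian of the dual function $F_*$ directly and compare the resulting concavity condition with \eqref{equ-property3}. The gap between the two will be closed by the Cauchy--Schwarz inequality together with Euler's relation for the homogeneous function $F$.

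\textbf{Step 1: Hessian of $F_*$.} Write $\kappa_i=\mu_i^{-1}$, so that $F_*(\mu)=F(\kappa)^{-1}$ and $\partial\kappa_j/\partial\mu_j=-\kappa_j^2$. Differentiating once gives
\[
\frac{\partial F_*}{\partial\mu_i}=F^{-2}\dot F^i\kappa_i^2 .
\]
Differentiating again, with all derivatives of $F$ evaluated at $\kappa$, gives for any $\xi\in\mathbb{R}^n$
\[
\sum_{i,j}\frac{\partial^2F_*}{\partial\mu_i\partial\mu_j}\xi_i\xi_j
=F^{-2}\Big[\,2F^{-1}\Big(\sum_i\dot F^i y_i\Big)^2-\sum_{i,j}\ddot F^{ij}y_iy_j-2\sum_i\frac{\dot F^i}{\kappa_i}y_i^2\Big],
\qquad y_i:=\kappa_i^2\xi_i .
\]
The map $\xi\mapsto y$ is a linear bijection. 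Hence $F_*$ is concave on $\Gamma_+$ if and only if, for all $y\in\mathbb{R}^n$,
\[
\sum_{i,j}\ddot F^{ij}y_iy_j\ \ge\ 2F^{-1}\Big(\sum_i\dot F^iy_i\Big)^2-2\sum_i\frac{\dot F^i}{\kappa_i}y_i^2 .
\]

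\textbf{Step 2: Reduce to a Cauchy--Schwarz inequality.} By \eqref{equ-property3}, the left side is at least $F^{-1}(\sum_i\dot F^iy_i)^2-\sum_i\dot F^iy_i^2/\kappa_i$. It therefore suffices to show
\[
F^{-1}\Big(\sum_i\dot F^iy_i\Big)^2\le\sum_i\frac{\dot F^i}{\kappa_i}y_i^2 .
\]

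\textbf{Step 3: Prove that inequality.} Since $\dot F^i>0$ and $\kappa_i>0$, write $\dot F^iy_i=\sqrt{\dot F^i\kappa_i}\cdot\sqrt{\dot F^i/\kappa_i}\,y_i$. Cauchy--Schwarz then gives
\[
\Big(\sum_i\dot F^iy_i\Big)^2\le\Big(\sum_i\dot F^i\kappa_i\Big)\Big(\sum_i\frac{\dot F^i}{\kappa_i}y_i^2\Big).
\]
By Euler's relation for degree-one homogeneity, $\sum_i\dot F^i\kappa_i=F$. Dividing by $F>0$ yields exactly the inequality of Step 2, so $F_*$ is concave.

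The only real work is the Hessian computation in Step 1, in particular keeping track of the factors $\kappa_i^2$ coming from the inversion $\mu_i=\kappa_i^{-1}$. Once the substitution $y_i=\kappa_i^2\xi_i$ is made, the comparison with \eqref{equ-property3} and the Cauchy--Schwarz step are immediate.
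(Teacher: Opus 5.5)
Your proposal is correct and follows essentially the same route as the paper: the same Hessian of $F_*$, the same substitution $y_i=\kappa_i^2\xi_i$, and then \eqref{equ-property3} plus Cauchy--Schwarz with Euler's relation $\sum_i\dot F^i\kappa_i=F$. The paper writes $-D^2F_*(\xi,\xi)=I_1+I_2$ with each piece nonnegative, which is just a repackaging of your Step 2 reduction.
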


\begin{proof}
Let $\mu_i=\kappa_i^{-1}$, so that $F_*(\mu)=1/{F(\kappa)}$. Write $\dot{F}^i=\frac{\partial F}{\partial \kappa_i}$ and
$\ddot{F}^{ij}=\frac{\partial^2 F}{\partial \kappa_i\partial \kappa_j}$. A direct computation gives
\[
\frac{\partial F_*}{\partial \mu_i}
=\frac{\dot{F}^i\kappa_i^2}{F^2},
\]
and
\[
\frac{\partial^2 F_*}{\partial \mu_i\partial \mu_j}
=
\frac{\kappa_i^2\kappa_j^2}{F^2}
\left(
2\frac{\dot{F}^i \dot{F}^j}{F}-\ddot{F}^{ij}-2\frac{\dot{F}^i}{\kappa_i}\delta_{ij}
\right).
\]
Therefore, for any $\xi=(\xi_1,\dots,\xi_n)\in \mathbb{R}^n$, setting $\eta_i=\kappa_i^2\xi_i$, we obtain
\begin{align*}
-\sum_{i,j=1}^n \frac{\partial^2 F_*}{\partial \mu_i\partial \mu_j}\,\xi_i\xi_j
=&\frac1{F^2}
\sum_{i,j=1}^n
\left(
\ddot{F}^{ij}+2\frac{\dot{F}^i}{\kappa_i}\delta_{ij}-2\frac{\dot{F}^i \dot{F}^j}{F}
\right)\eta_i\eta_j\\
=& I_1+I_2,
\end{align*}
where
\[
I_1=
\frac1{F^2}\sum_{i,j=1}^n
\left(
\ddot{F}^{ij}+\frac{\dot{F}^i}{\kappa_i}\delta_{ij}-\frac{\dot{F}^i \dot{F}^j}{F}
\right)\eta_i\eta_j
\]
and
\[
I_2=
\frac1{F^2}\left(\sum_{i=1}^n \frac{\dot{F}^i}{\kappa_i}\eta_i^2
-\frac1F\left(\sum_{i=1}^n \dot{F}^i\eta_i\right)^2\right).
\]

By the condition \eqref{equ-property3}, $I_1\ge 0$. On the other hand, since $\dot{F}^i>0$ and $F$ is homogeneous of degree one, Euler's identity gives $\sum_{i=1}^n \dot{F}^i\kappa_i=F$. Thus, by Cauchy--Schwarz,
\begin{align*}
\left(\sum_{i=1}^n \dot{F}^i\eta_i\right)^2
\leq &
\left(\sum_{i=1}^n \dot{F}^i\kappa_i\right)
\left(\sum_{i=1}^n \frac{\dot{F}^i}{\kappa_i}\eta_i^2\right)
=
F\sum_{i=1}^n \frac{\dot{F}^i}{\kappa_i}\eta_i^2.
\end{align*}
Therefore $I_2\ge 0$. Combining the above inequalities, we conclude that
\[
-\sum_{i,j=1}^n \frac{\partial^2 F_*}{\partial \mu_i\partial \mu_j}\,\xi_i\xi_j
\ge 0
\qquad\text{for all }\xi\in \mathbb{R}^n.
\]
Hence  $F_*$ is concave and so $F$ is inverse concave.
\end{proof}

We then show that the condition \eqref{equ-property3} is equivalent to the following log-convexity property. 
\begin{lemma}\label{lem:logF-convex}
Let $F:\Gamma_+\to (0,\infty)$ be a smooth symmetric function. Define
\begin{equation}\label{s3.Phi}
  \Phi(z_1,\dots,z_n):=\log F(e^{z_1},\dots,e^{z_n}),\qquad z=(z_1,\dots,z_n)\in\mathbb{R}^n.  
\end{equation}
Then the condition \eqref{equ-property3} is equivalent to that the function $\Phi$ is a convex function on $\mathbb{R}^n$.
\end{lemma}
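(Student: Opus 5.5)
The plan is to compute the Hessian of $\Phi$ directly in terms of $\dot F$ and $\ddot F$, and then to rewrite the quadratic form $D^2\Phi$ so that it matches \eqref{equ-property3} after the substitution $y_k=\kappa_k\xi_k$. Write $\kappa_k=e^{z_k}$. By the chain rule,
\[
\frac{\partial \Phi}{\partial z_k}=\frac{\dot F^k\kappa_k}{F},
\]
and differentiating once more (using $\partial\kappa_\ell/\partial z_\ell=\kappa_\ell$) gives
\[
\frac{\partial^2 \Phi}{\partial z_k\partial z_\ell}
=\frac{\ddot F^{k\ell}\kappa_k\kappa_\ell}{F}+\frac{\dot F^k\kappa_k}{F}\delta_{k\ell}-\frac{\dot F^k\kappa_k\dot F^\ell\kappa_\ell}{F^2}.
\]

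Next, for $\xi\in\mathbb{R}^n$, set $y_k=\kappa_k\xi_k$, so that $\xi_k=y_k/\kappa_k$. Substituting yields
\[
F\sum_{k,\ell}\frac{\partial^2\Phi}{\partial z_k\partial z_\ell}\xi_k\xi_\ell
=\sum_{k,\ell}\ddot F^{k\ell}y_ky_\ell+\sum_k\frac{\dot F^k}{\kappa_k}y_k^2-F^{-1}\Bigl(\sum_k\dot F^ky_k\Bigr)^2 .
\]
The right-hand side is exactly the difference between the two sides of \eqref{equ-property3}.

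To conclude, note that $F>0$ and that the map $\xi\mapsto y$ is a linear bijection of $\mathbb{R}^n$, since every $\kappa_k>0$. Hence $D^2\Phi(z)\ge0$ for all $z\in\mathbb{R}^n$ if and only if \eqref{equ-property3} holds at every $\kappa\in\Gamma_+$. Because $\mathbb{R}^n$ is convex and $\Phi$ is smooth, convexity of $\Phi$ is equivalent to $D^2\Phi\ge 0$ everywhere. Finally, $z\mapsto e^z$ is a diffeomorphism from $\mathbb{R}^n$ onto $\Gamma_+$, so the pointwise statements correspond exactly and the equivalence holds in both directions.

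There is no real obstacle in this argument. The only point requiring care is keeping track of the chain-rule factor $\kappa_k$ in the second derivative, since it is the source of the diagonal term $\dot F^k\kappa_k\delta_{k\ell}/F$, which becomes $\dot F^k y_k^2/\kappa_k$ after the substitution. Symmetry and homogeneity of $F$ are not needed for this equivalence.
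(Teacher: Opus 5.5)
Your proposal is correct and follows the paper's proof essentially verbatim: compute $D^2\Phi$ by the chain rule, substitute $y_k=\kappa_k\xi_k$, and identify $F\,D^2\Phi(\xi,\xi)$ with the difference of the two sides of \eqref{equ-property3}. Your extra remarks (the substitution is a linear bijection, $z\mapsto e^z$ is a diffeomorphism onto $\Gamma_+$, and smooth convexity is equivalent to $D^2\Phi\ge 0$) just spell out details the paper leaves implicit.
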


\begin{proof}
Since $\kappa_i=e^{z_i}$, we have
\[\frac{\partial\Phi}{\partial z_i}
=\frac{\kappa_i\dot{F}^i}{F}, \]
and
\[\frac{\partial^2\Phi}{\partial z_i\partial z_j}
=\frac{\kappa_i\kappa_j\ddot{F}^{ij}}{F}
+\delta_{ij}\frac{\kappa_i\dot{F}^i}{F}
-\frac{\kappa_i\dot{F}^i\,\kappa_j\dot{F}^j}{F^2}.
\]
Hence, for any $w=(w_1,\dots,w_n)\in\mathbb{R}^n$, letting $y_i=\kappa_i w_i$, we obtain
\[\sum_{i,j}\frac{\partial^2\Phi}{\partial z_i\partial z_j}w_iw_j
=\frac{1}{F}\left(\sum_{i,j}\ddot{F}^{ij}y_iy_j
+\sum_i \frac{\dot{F}^i}{\kappa_i}y_i^2
-F^{-1}\Big(\sum_i \dot{F}^i y_i\Big)^2\right).
\]
Therefore $D^2\Phi\ge 0$ on $\mathbb{R}^n$ if and only if \eqref{equ-property3} holds for all $y\in\mathbb{R}^n$.
\end{proof}

The log-convexity property has the following consequence, which will be used to estimate the gradient terms in the proof of the curvature pinching estimate in Section \ref{sec4}.

\begin{corollary}\label{lem.cond-ii}
Let $F:\Gamma_+\to (0,\infty)$ be a smooth symmetric function. If $F$ satisfies the condition \eqref{equ-property3}, then 
\begin{equation}\label{eq:old-property2}
\big(\dot F^i\kappa_i-\dot F^j\kappa_j\big)(\kappa_i-\kappa_j)\ge 0
\qquad \text{for all } i\neq j.
\end{equation}
\end{corollary}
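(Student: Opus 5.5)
The plan is to deduce \eqref{eq:old-property2} from the convexity of $\Phi(z)=\log F(e^{z_1},\dots,e^{z_n})$ given by Lemma \ref{lem:logF-convex}, together with the symmetry of $F$. The key observation is that
\[
\frac{\partial\Phi}{\partial z_i}=\frac{\kappa_i\dot F^i}{F},
\]
so, after multiplying by $F>0$, the desired inequality is equivalent to
\[
\Big(\frac{\partial\Phi}{\partial z_i}-\frac{\partial\Phi}{\partial z_j}\Big)(z_i-z_j)\ge 0 .
\]
Here $z_i=\log\kappa_i$. Since $\log$ is strictly increasing, $z_i-z_j$ has the same sign as $\kappa_i-\kappa_j$.

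To prove the inequality in this form, fix $i\neq j$ and a point $z$. Let $z'$ be obtained from $z$ by swapping the coordinates $z_i$ and $z_j$. By symmetry of $F$, we have $\Phi(z')=\Phi(z)$, and $\nabla\Phi(z')$ equals $\nabla\Phi(z)$ with its $i$th and $j$th components swapped.

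Convexity of $\Phi$ gives monotonicity of its gradient:
\[
\langle\nabla\Phi(z)-\nabla\Phi(z'),\,z-z'\rangle\ge 0 .
\]
The vector $z-z'$ has only two nonzero entries: $z_i-z_j$ in slot $i$ and $z_j-z_i$ in slot $j$. Similarly, the $i$th entry of $\nabla\Phi(z)-\nabla\Phi(z')$ is $\partial_i\Phi(z)-\partial_j\Phi(z)$, and the $j$th entry is its negative. The inner product therefore equals
\[
2\big(\partial_i\Phi(z)-\partial_j\Phi(z)\big)(z_i-z_j)\ge 0 .
\]

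It remains to convert back to $\kappa$. Multiplying by $F/2$ gives $(\dot F^i\kappa_i-\dot F^j\kappa_j)(z_i-z_j)\ge0$. Since $z_i-z_j$ and $\kappa_i-\kappa_j$ have the same sign, and both vanish exactly when $\kappa_i=\kappa_j$, this yields \eqref{eq:old-property2}. The only point needing care is the gradient-swap identity under symmetry; this follows by differentiating $\Phi(z)=\Phi(Pz)$, where $P$ is the transposition matrix, which gives $\nabla\Phi(z)=P\nabla\Phi(Pz)$. I don't expect any real obstacle.
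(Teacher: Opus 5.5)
Your proposal is correct and follows the paper's route: use Lemma \ref{lem:logF-convex} to get convexity of the symmetric function $\Phi$, then translate back via $\partial\Phi/\partial z_i=\kappa_i\dot F^i/F$ and the fact that $z_i-z_j$ and $\kappa_i-\kappa_j$ have the same sign. The only difference is that the paper cites \cite[Lemma 2]{EH89} for $(z_i-z_j)(\partial_i\Phi-\partial_j\Phi)\ge 0$, whereas you prove this inequality directly by applying gradient monotonicity to $z$ and its transposed point $Pz$; that argument is valid and makes the proof self-contained.
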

\proof 
Indeed, by Lemma \ref{lem:logF-convex} the condition \eqref{equ-property3} is equivalent to the convexity of the function $\Phi$ defined in \eqref{s3.Phi}. Since $F$ is symmetric, $\Phi$ is also symmetric. Therefore $\Phi$ is a symmetric convex function on $\mathbb{R}^n$, and hence (see \cite[Lemma 2]{EH89})
\[
(z_i-z_j)\big(\frac{\partial\Phi}{\partial z_i}-\frac{\partial\Phi}{\partial z_j}\big)\ge 0
\qquad \text{for all } i\neq j.
\]
Noting that
\[
\frac{\partial\Phi}{\partial z_i}=\frac{\kappa_i\dot F^i}{F},
\qquad \kappa_i=e^{z_i},
\]
and that $\kappa_i-\kappa_j$ has the same sign as $z_i-z_j$, we obtain
\[
(\kappa_i-\kappa_j)\Big(\frac{\kappa_i\dot F^i}{F}-\frac{\kappa_j\dot F^j}{F}\Big)\ge 0,
\]
which is exactly \eqref{eq:old-property2}.
\endproof

\subsection{Geometric-mean lower bound}
Using the log-convexity property in Lemma \ref{lem:logF-convex}, we also  prove a useful lower bound of $F$ in terms of the geometric mean, which will be used crucially in our proof of curvature pinching estimate in Section \ref{sec4}. 

\begin{proposition}\label{lem:gm-lower-bound}
 Let $F:\Gamma_+\to (0,\infty)$ be a smooth symmetric function on $\Gamma_+$ satisfying the Assumption \ref{assumption}. Then
\begin{equation}\label{equ-f-lower}
F(\kappa)\ge \left(\prod_{i=1}^n \kappa_i\right)^{1/n}
\qquad \text{for all }\kappa\in \Gamma_+.
\end{equation}
\end{proposition}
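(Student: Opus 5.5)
We work in the logarithmic variables of Lemma \ref{lem:logF-convex}. Set $\Phi(z)=\log F(e^{z_1},\dots,e^{z_n})$ as in \eqref{s3.Phi}. By Lemma \ref{lem:logF-convex}, $\Phi$ is convex on $\mathbb{R}^n$. Since $F$ is symmetric, $\Phi$ is symmetric. Homogeneity of degree one gives $\Phi(z+s\mathbf{1})=\Phi(z)+s$ for $\mathbf{1}=(1,\dots,1)$ and all $s\in\mathbb{R}$. The normalization $F(1,\dots,1)=1$ gives $\Phi(0)=0$. In these variables the inequality \eqref{equ-f-lower} becomes
\[
\Phi(z)\ \ge\ \bar z:=\frac1n\sum_{i=1}^n z_i\qquad\text{for all } z\in\mathbb{R}^n .
\]

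The plan is to average over permutations and apply Jensen's inequality. Fix $z\in\mathbb{R}^n$. For each permutation $\sigma\in S_n$, symmetry gives $\Phi(z_{\sigma(1)},\dots,z_{\sigma(n)})=\Phi(z)$. The average of the permuted vectors is $\frac{1}{n!}\sum_{\sigma}z_\sigma=\bar z\,\mathbf{1}$. Convexity of $\Phi$ then yields
\[
\Phi(\bar z\,\mathbf{1})\le \frac{1}{n!}\sum_{\sigma\in S_n}\Phi(z_\sigma)=\Phi(z).
\]
By the translation property and normalization, $\Phi(\bar z\,\mathbf{1})=\Phi(0)+\bar z=\bar z$. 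Therefore $\Phi(z)\ge \bar z$. Exponentiating and substituting $\kappa_i=e^{z_i}$ gives $F(\kappa)\ge e^{\bar z}=(\prod_i\kappa_i)^{1/n}$.

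An alternative route is a supporting-hyperplane argument at $z=0$. There $\partial\Phi/\partial z_i=\dot F^i(1,\dots,1)=1/n$, by symmetry and Euler's identity. Convexity then gives $\Phi(z)\ge\Phi(0)+\frac1n\sum_i z_i$, which is the same inequality.

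There is no real obstacle here. The only points to check are that $\Phi$ is defined and convex on all of $\mathbb{R}^n$, which holds because $z\mapsto e^{z}$ maps $\mathbb{R}^n$ onto $\Gamma_+$, and that homogeneity translates into the additive shift property. Both are immediate from Assumption \ref{assumption} and Lemma \ref{lem:logF-convex}.
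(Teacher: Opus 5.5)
Your proof is correct and matches the paper's argument essentially step for step. Like the paper, you pass to $\Phi(z)=\log F(e^{z})$, use its convexity (Lemma \ref{lem:logF-convex}) and symmetry to get $\Phi(\bar z,\dots,\bar z)\le\Phi(z)$ by averaging over permutations, and then use homogeneity and the normalization to evaluate the left side as $\bar z$. Your supporting-hyperplane variant at $z=0$, which uses $\partial_i\Phi(0)=1/n$ from symmetry and Euler's identity, is also valid.
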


\begin{proof}
For each permutation $\sigma\in S_n$, where $S_n$ denotes the set of all permutations of $\{1,\dots,n\}$, let $P_\sigma z$ denote the corresponding permutation of $z$. Since $F$ is symmetric, the function $\Phi$ defined in \eqref{s3.Phi} is symmetric, and hence
\[\Phi(P_\sigma z)=\Phi(z)
\qquad \text{for all }\sigma\in S_n.
\]
On the other hand,
\[\frac1{n!}\sum_{\sigma\in S_n} P_\sigma z=(\bar z,\dots,\bar z),\quad \bar{z}=\frac{1}{n}\sum_{i=1}^nz_i.
\]
Therefore, by convexity of $\Phi$,
\[\Phi(\bar z,\dots,\bar z)
=\Phi\!\left(\frac1{n!}\sum_{\sigma\in S_n}P_\sigma z\right)
\le\frac1{n!}\sum_{\sigma\in S_n}\Phi(P_\sigma z)
=\Phi(z).
\]
Since $F$ is homogeneous of degree one and normalized by $F(1,\dots,1)=1$, exponentiating the above inequality yields
\begin{align*}
    F(e^{z_1},\dots,e^{z_n})\ge &F(e^{\bar z},\dots,e^{\bar z})\\
    =&e^{\bar z}F(1,\dots,1)    =e^{\bar z}.
\end{align*}
Since $e^{\bar z}=(e^{z_1}\cdots e^{z_n})^{1/n}$, the inequality is equivalent to \eqref{equ-f-lower}.
\end{proof}

\subsection{Examples}
Using Lemma \ref{lem:logF-convex}, it is easy to verify that both
\[
F(\kappa)=\Big(\frac1n\sum_{i=1}^n \kappa_i^r\Big)^{1/r},\quad r>0.
\]
and 
\[
F(\kappa)=E_k^{1/k}(\kappa),\quad k=1,\cdots,n
\]
satisfy Assumption \ref{assumption}.  Indeed, the condition (i) can be verified directly; to verify condition (ii), it suffices to verify $\Phi(z)=\log F(e^z)$ is convex. For the first function, 
\[\Phi(z)
=\frac1r\log\Big(\frac1n\sum_{i=1}^n e^{rz_i}\Big).
\]
This is the logarithm of a sum of exponentials of linear functions, multiplied by the positive constant $1/r$; hence it is convex. For the second function, 
\[\Phi(z)
=\frac1k\log\Bigg(
\binom{n}{k}^{-1}
\sum_{1\le i_1<\cdots<i_k\le n}
e^{z_{i_1}+\cdots+z_{i_k}}
\Bigg).
\]
Again, this is a log-sum-exp function, hence convex.

Generally, assume that $F^{(1)},\dots,F^{(m)}$ satisfy Assumption \ref{assumption}, and let $\theta_\alpha>0$ and $\sum_{\alpha=1}^m \theta_\alpha=1$.Then
\[
F(\kappa):=\prod_{\alpha=1}^m \big(F^{(\alpha)}(\kappa)\big)^{\theta_\alpha}
\]
also satisfies Assumption \ref{assumption}. This follows that 
\[\Phi(z):=\log F(e^z)
=\sum_{\alpha=1}^m \theta_\alpha
\log F^{(\alpha)}(e^z),
\]
and  each $z\mapsto \log F^{(\alpha)}(e^z)$ is symmetric and convex.

\section{Curvature pinching estimate}\label{sec4}

Let $F$ satisfy Assumption \ref{assumption}. If the initial hypersurface $M_0$ has positive sectional curvature, then an argument analogous to that in \cite[Theorem 3.1]{ACW21} shows that the evolving hypersurface $M_t$ of the flow \eqref{equ-f} in hyperbolic space $\mathbb{H}^{n+1}$ also has positive sectional curvature for $t>0$. 
\begin{lemma}\label{s3.lem-psc}
	Let $F$ satisfy Assumption \ref{assumption}. If the initial hypersurface $M_0$ has positive sectional curvature, then along the flow \eqref{equ-f} in $\mathbb{H}^{n+1}$ the evolving hypersurface $M_t$ has positive sectional curvature for $t>0$.
\end{lemma}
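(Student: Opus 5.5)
We apply the maximum principle on the orthonormal frame bundle, following the strategy of \cite[Theorem 3.1]{ACW21}. For $x\in M$, $t\ge0$ and a $g(t)$-orthonormal frame $\mathbb{O}=\{e_1,\dots,e_n\}$ at $x$, set
\[
Z(x,t,\mathbb{O})=h(e_1,e_1)h(e_2,e_2)-h(e_1,e_2)^2-1,
\]
so that $\min_{\mathbb{O}}Z$ equals $\min_{i\ne j}(\kappa_i\kappa_j-1)$. Since $M_0$ is compact with positive sectional curvature, $Z>0$ at $t=0$. Suppose $t_0>0$ is the first time at which $\min Z=0$, attained at $(x_0,\mathbb{O}_0)$. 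By a rotation in the $e_1e_2$-plane we may take $h_{12}=0$ there. Minimality over frames then gives $h_{1k}=h_{2k}=0$ for $k\ge3$, so $e_1,e_2$ are principal directions with $\kappa_1\kappa_2=1$. Since $\kappa_1,\kappa_2$ are the extreme product pair, we may order so that $\kappa_1$ is the smallest principal curvature and $\kappa_2\le\kappa_k$ for $k\ge 3$; thus all $\kappa_i>0$ and $\kappa_i\kappa_j\ge1$ for $i\ne j$. We also need $\kappa\in\Gamma_+$ up to $t_0$, which follows by continuity because $\kappa_i\kappa_j>1$ together with connectedness keeps every curvature positive.

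The time derivative comes from \eqref{s2.evlh2}. One must account for the evolving frame, which is handled by a Uhlenbeck trick using \eqref{s2.evl-g}; on the frame bundle this simply replaces the metric term. The zeroth-order terms at the point are $(\dot F^{k\ell}(h^2)_{k\ell}+\dot F^{k\ell}g_{k\ell})(\kappa_1\kappa_2+\kappa_1\kappa_2)-2F(\kappa_1^2+1)\kappa_2-2F(\kappa_2^2+1)\kappa_1$. Using $\kappa_1\kappa_2=1$, this reduces to
\[
2\sum_k\dot F^k(\kappa_k^2+1)-2F(\kappa_1+\kappa_2)\cdot 2 .
\]
The aim is to show this is nonnegative. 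By Euler's identity $\sum\dot F^k\kappa_k=F$, the expression equals $2\sum_k\dot F^k(\kappa_k-\kappa_1)(\kappa_k-\kappa_2)$ plus terms that vanish because $\kappa_1\kappa_2=1$. For $k=1,2$ the summand is zero, and for $k\ge3$ it is nonnegative since $\kappa_k\ge\kappa_2\ge\kappa_1$. The reaction terms are therefore favorable.

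The main obstacle is the gradient terms. Beyond the diffusion $\dot F^{k\ell}\nabla_k\nabla_\ell Z$, we have $\kappa_2\ddot F^{k\ell,pq}\nabla_1h_{k\ell}\nabla_1h_{pq}+\kappa_1\ddot F^{k\ell,pq}\nabla_2h_{k\ell}\nabla_2h_{pq}$, and the term $2\dot F^{k\ell}(\nabla_kh_{11}\nabla_\ell h_{22}-\nabla_kh_{12}\nabla_\ell h_{12})$ coming from the product. As in \cite{And07,ACW21}, we use the full second-order condition on the frame bundle, including rotations of $e_1,e_2$ toward $e_p$ ($p\ge3$) that depend on position. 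This improves the diffusion inequality to $\dot F^{k\ell}\nabla_k\nabla_\ell Z\ge 2\sup_\Lambda(\cdots)$, producing the extra terms $\sum_{p>2}\dot F^k\frac{(\kappa_2\nabla_kh_{1p})^2}{\kappa_p-\kappa_1}$-type terms, as in the Andrews tensor maximum principle. First-order criticality gives $\kappa_2\nabla_kh_{11}+\kappa_1\nabla_kh_{22}=0$.

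Write $\ddot F^{k\ell,pq}$ via \eqref{equ-secf}. The diagonal part is bounded below by \eqref{equ-property3}, giving $F^{-1}(\nabla_1F)^2-\sum_k\frac{\dot F^k}{\kappa_k}(\nabla_1h_{kk})^2$. The off-diagonal part has coefficients $\frac{\dot F^k-\dot F^\ell}{\kappa_k-\kappa_\ell}$, and these are controlled using Corollary \ref{lem.cond-ii}, namely $(\dot F^k\kappa_k-\dot F^\ell\kappa_\ell)(\kappa_k-\kappa_\ell)\ge0$. The negative term $-\sum\frac{\dot F^k}{\kappa_k}(\nabla_1h_{kk})^2$ must be absorbed, using Codazzi ($\nabla_1h_{kk}=\nabla_kh_{1k}$), by the diffusion gain $\dot F^k(\nabla_kh_{1k})^2$-type terms together with the criticality relation. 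This is exactly where the weight $\frac1{\kappa_k}$ in \eqref{equ-property3} is designed to match $\kappa_1\kappa_2=1$.

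We would organize the computation by pairing terms indexed by $k$ as in \cite{ACW21}, and show that each group is a nonnegative quadratic form. The inequality then reads $\partial_tZ\ge0$ at the minimum, which contradicts first attainment. Strictly, one should apply this argument to $Z+\delta e^{Ct}$, or use the strong maximum principle, to obtain strict positivity. This yields the statement of Lemma \ref{s3.lem-psc}.
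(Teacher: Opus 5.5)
Your strategy is the paper's. The paper omits the proof of this lemma, citing \cite[Theorem 3.1]{ACW21} with $\phi\equiv 0$, and the same frame-bundle computation appears in the proof of Proposition \ref{s4.lem-pinc}; your $Z$ is the case $\varepsilon=0$ there. However, one displayed step is wrong as written, and the decisive gradient estimate is only asserted.

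\emph{The reaction term.} Your displayed reaction term leaves out the moving frame. Along $\frac{d}{dt}e_i=F\mathcal W(e_i)$ one has $\frac{d}{dt}\,h(e_i,e_i)=\partial_t h_{ii}+2F\kappa_i^2$. This cancels the $(h^2)_{ii}$ part, not the metric part, of $-2F\big((h^2)_{ii}+g_{ii}\big)$ in \eqref{s2.evlh2}. So the zeroth-order terms are
\[
2\big(\dot F^{k\ell}(h^2)_{k\ell}+\dot F^{k\ell}g_{k\ell}\big)\kappa_1\kappa_2-2F(\kappa_1+\kappa_2)
=2\sum_k\dot F^k\kappa_k^2\,Z+2\sum_k\dot F^k(\kappa_k-\kappa_1)(\kappa_k-\kappa_2),
\]
as in \eqref{equ-zero-2}. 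Your expression $2\sum_k\dot F^k(\kappa_k^2+1)-4F(\kappa_1+\kappa_2)$ instead equals $2\sum_k\dot F^k(\kappa_k-\kappa_1)(\kappa_k-\kappa_2)-2F(\kappa_1+\kappa_2)$. That is negative (it equals $-4F$ when all $\kappa_i=1$), so the claimed identity ``plus terms that vanish because $\kappa_1\kappa_2=1$'' is false. Your conclusion holds only once the frame term is included.

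\emph{The gradient terms.} The gain from rotating $e_1,e_2$ toward $e_p$ is
\[
2\sum_k\dot F^k\sum_{p>2}\Big(\frac{\kappa_2}{\kappa_p-\kappa_1}(\nabla_kh_{1p})^2+\frac{\kappa_1}{\kappa_p-\kappa_2}(\nabla_kh_{2p})^2\Big),
\]
with one factor of $\kappa_2$, not $\kappa_2^2$. Combine this with \eqref{equ-property3}, the bound $\frac{\dot F^i-\dot F^j}{\kappa_i-\kappa_j}\ge-\frac{\dot F^i}{\kappa_j}$ from Corollary \ref{lem.cond-ii}, and Codazzi. Then every group of terms with an index $\ge3$ is nonnegative except $-2\sum_{i\ge3}\dot F^i\nabla_ih_{11}\nabla_ih_{22}$. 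The gradient terms are therefore bounded below by (compare \eqref{equ-2ndgauss2} with $\varepsilon=0$)
\[
-\sum_{i=1}^2\frac{\dot F^i}{\kappa_1\kappa_2}\big(\kappa_2\nabla_ih_{11}+\kappa_1\nabla_ih_{22}\big)^2-2\sum_{i\ge3}\dot F^i\nabla_ih_{11}\nabla_ih_{22}.
\]
The critical condition kills the first sum and turns the second into $2\frac{\kappa_2}{\kappa_1}\sum_{i\ge3}\dot F^i(\nabla_ih_{11})^2\ge0$. So the $1/\kappa_k$ weights in \eqref{equ-property3} are what produce a perfect square that criticality annihilates; $\kappa_1\kappa_2=1$ plays no role in this step.

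\emph{Closing the argument.} Neither this bound nor the reaction bound uses $Z=0$. Hence $\partial_tZ\ge0$ at every spatial minimum with $Z\ge0$, so $\min Z$ is nondecreasing and $Z\ge\min_{M_0}Z>0$ directly. This replaces your ``contradiction with first attainment'', which, as you noticed, does not follow from a non-strict inequality, and makes the $\delta e^{Ct}$ perturbation unnecessary.
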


Indeed, this result follows from the argument as in the proof of \cite[Theorem 3.1]{ACW21} for the volume‑preserving curvature flow, after  setting the global term $\phi(t)$ there to zero. We therefore omit the details.

Next, we derive the curvature pinching estimate of the flow \eqref{equ-f} under Assumption \ref{assumption}. This is a key step in studying contracting curvature flows. 
\begin{proposition}\label{s4.lem-pinc}
	Let $F$ satisfy Assumption \ref{assumption}. If the initial hypersurface $M_0$ has positive sectional curvature, then there exists a constant $C$ depending on $n$ and $M_0$ such that along the flow \eqref{equ-f} in $\mathbb{H}^{n+1}\ (n\geq 3)$ the evolving hypersurface $M_t$ satisfies
	\begin{equation}
		\label{equ-pinching}
		\kappa_n \leq C \kappa_1
	\end{equation}
    for all $t>0$.
\end{proposition}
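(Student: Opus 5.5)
The plan is to run the maximum-principle argument sketched in the introduction on the orthonormal frame bundle. For $\varepsilon>0$ I set
\[
G(x,t,\mathbb{O}) = h(e_1,e_1)h(e_2,e_2)-h(e_1,e_2)^2-1-\varepsilon F^2 .
\]
Since $M_0$ is compact with $\kappa_i\kappa_j>1$ for $i\neq j$, we can choose $\varepsilon=\varepsilon(M_0)>0$ with $G>0$ at $t=0$. I want to show that $\min G$ stays positive, which gives $\kappa_1\kappa_2\ge 1+\varepsilon F^2$ for all $t$.

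The pinching then follows from the algebra of Proposition \ref{lem:gm-lower-bound}. Order $\kappa_1\le\cdots\le\kappa_n$. Positive sectional curvature (Lemma \ref{s3.lem-psc}) gives $\kappa_i\ge\kappa_2$ for $i\ge 2$, so
\[
F^n\ge \prod_i\kappa_i\ge \kappa_1\kappa_2^{n-2}\kappa_n .
\]
Combining this with $\kappa_1\kappa_2\ge \varepsilon F^2$ and with $F\ge c\kappa_n$ would close the argument. The last bound holds because $F$ is increasing and homogeneous, so $F\ge \kappa_1 F(\ldots)$, though this is not quite enough. Since $F\ge (\prod\kappa_i)^{1/n}$ is the stated tool, I would argue as follows. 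From $\kappa_1\kappa_2\ge\varepsilon F^2\ge \varepsilon(\prod\kappa_i)^{2/n}$ and $\prod\kappa_i\ge \kappa_1\kappa_2\,\kappa_2^{n-3}\kappa_n$, we get
\[
(\kappa_1\kappa_2)^{n}\ge \varepsilon^n(\kappa_1\kappa_2)^2\kappa_2^{2(n-3)}\kappa_n^2 ,
\]
that is, $\kappa_1^{n-2}\kappa_2^{n-2}\ge \varepsilon^n\kappa_2^{2n-6}\kappa_n^2$. Using $\kappa_2\le\kappa_n$, this yields $\kappa_1^{n-2}\ge\varepsilon^n\kappa_2^{n-4}\kappa_n^2\ge \varepsilon^n\kappa_1^{n-4}\kappa_n^2$ when $n\ge 4$, hence $\kappa_n\le \varepsilon^{-n/2}\kappa_1$. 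For $n=3$ the same chain gives $\kappa_1\kappa_2\ge \varepsilon^3\kappa_n^2$, i.e. $\kappa_1\ge\varepsilon^3\kappa_n^2/\kappa_2\ge\varepsilon^3\kappa_n$. This is where $n\ge3$ enters, and the constant depends only on $n$ and $\varepsilon(M_0)$.

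The main work is the maximum principle. Suppose $G$ first attains $0$ at $(x_0,t_0,\mathbb{O})$. At that point $e_1,e_2$ can be taken as eigenvectors with $\kappa_1\kappa_2=1+\varepsilon F^2$, which forces $\kappa_1,\kappa_2$ to be the two smallest principal curvatures. I would compute $\partial_t G$ in a frame evolving to stay orthonormal, using \eqref{s2.evlh2} and \eqref{equ-psi}. The resulting zeroth-order reaction terms should be nonnegative. They are of the form
\[
\dot F^{kl}(h^2)_{kl}\,(2\kappa_1\kappa_2-2\varepsilon F^2)+\dot F^{kl}g_{kl}(2\kappa_1\kappa_2+2\varepsilon F^2)-2F(\kappa_1+\kappa_2)(\kappa_1\kappa_2+1).
\]
At the minimum $\kappa_1\kappa_2=1+\varepsilon F^2$, and these terms can be bounded from below using $\sum\dot F^i\kappa_i^2\ge F^2$ from \eqref{equ-inv2} (inverse concavity, Lemma \ref{lem:inverse-concave}) together with $\dot F^i\kappa_i\ge\dots$ via Corollary \ref{lem.cond-ii}, exactly as in \cite{ACW21} with $\phi\equiv0$. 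The $\varepsilon F^2$ correction contributes $-2\varepsilon F(\dot F^{kl}\nabla_k\nabla_l F)$ and $-2\varepsilon F^2(\dot F h^2-\dot F g)$, which have to be absorbed.

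The hard part, which I expect to be the main obstacle, is the gradient terms. These are $\ddot F^{kl,pq}\nabla_1h_{kl}\nabla_1h_{pq}\,\kappa_2+\ddots$ and $-2\dot F^{kl}\nabla_kh_{11}\nabla_lh_{22}+2\dot F^{kl}\nabla_kh_{12}\nabla_lh_{12}$, plus $-2\varepsilon\dot F^{kl}\nabla_kF\nabla_lF$. Following Andrews' tensor maximum principle, I would use the full second-order condition on $O(M)$, i.e. first and second variations of $G$ under frame rotations $e_1\mapsto e_1\cos\theta+e_p\sin\theta$ with $\theta$ depending linearly on position. This produces the extra good terms $\sum_{p>2}\frac{\dot F^{kk}(\ldots)^2}{\kappa_p-\kappa_1}$. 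I would then use \eqref{equ-secf} to split $\ddot F$, apply condition \eqref{equ-property3} to the diagonal part with $y_k=\nabla_1h_{kk}$ weighted by $\kappa_2/\kappa_1$-type factors, and use the first-order condition $\kappa_2\nabla h_{11}+\kappa_1\nabla h_{22}=2\varepsilon F\nabla F$ to cancel the cross terms. The $F^{-1}(\dot F y)^2$ term in \eqref{equ-property3} is precisely what should dominate the $\varepsilon|\nabla F|^2$ term. Corollary \ref{lem.cond-ii} controls the off-diagonal $\frac{\dot F^i-\dot F^j}{\kappa_i-\kappa_j}$ contributions. This would show $\partial_tG\ge0$ at the minimum, and hence $G>0$ for all $t$.
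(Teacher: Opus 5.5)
Your overall strategy is the paper's: the function $G$ on $O(M)$, frame rotations in the second variation, \eqref{equ-property3} on the diagonal part of $\ddot F$, Corollary~\ref{lem.cond-ii} on the off-diagonal part, the critical-point identity, and Proposition~\ref{lem:gm-lower-bound} at the end. Your closing algebra is correct. Your separate treatment of $n=3$ is in fact needed: the paper's one-line chain $(\prod_i\kappa_i)^{2/n}\ge\kappa_1^{(n-2)/n}\kappa_2\kappa_n^{2/n}$ is only valid for $n\ge4$ (for $n=3$ it amounts to $\kappa_1\ge\kappa_2$), whereas your bound $\kappa_3\le\varepsilon^{-3}\kappa_1$ is sound. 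The gap is in the maximum-principle step, which is the real content of the proof. You only sketch it, and two of the ingredients you do write down are wrong in ways that would break it.

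First, your reaction term omits the contribution $2h(\partial_t e_i,e_i)=2F\kappa_i^2$ of the frame evolving by $\partial_t e_i=F\mathcal{W}(e_i)$. The last term should be $-2F(\kappa_1+\kappa_2)$, not $-2F(\kappa_1+\kappa_2)(\kappa_1\kappa_2+1)$. As written, your expression equals $-2(1+\varepsilon)\kappa^4-2(1-\varepsilon)\kappa^2<0$ at every umbilic point, so no appeal to \eqref{equ-inv2} can rescue it. With the correction, the zero-order part is the exact identity
\[
2\sum_k\dot F^k\kappa_k^2\,G+2\sum_k\dot F^k(\kappa_k-\kappa_1)(\kappa_k-\kappa_2)+2\varepsilon F^2\sum_k\dot F^k\ \ge\ 0,
\]
and inverse concavity plays no role.

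Second, the $\varepsilon$-gradient term has the opposite sign. Combining $-2\varepsilon F\dot F^{k\ell}\nabla_k\nabla_\ell F$ with the second-order condition on $O(M)$, which contains $-2\varepsilon F\nabla_s^2F-2\varepsilon(\nabla_sF)^2$, leaves the \emph{good} term $+2\varepsilon\sum_i\dot F^i(\nabla_iF)^2$, and this term is indispensable. The genuinely bad term created by the $\varepsilon F^2$ correction is a different one. After \eqref{equ-property3}, the bound $\frac{\dot F^i-\dot F^j}{\kappa_i-\kappa_j}\ge-\frac{\dot F^i}{\kappa_j}$ from Corollary~\ref{lem.cond-ii}, and cancellation against the rotation terms and the $(\nabla_ih_{12})^2$ terms, the remaining $i=1,2$ contributions collapse to $-\sum_{i=1}^2\frac{\dot F^i}{\kappa_1\kappa_2}(\kappa_2\nabla_ih_{11}+\kappa_1\nabla_ih_{22})^2$. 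By the critical condition this equals $-\frac{4\varepsilon^2F^2}{\kappa_1\kappa_2}\sum_{i=1}^2\dot F^i(\nabla_iF)^2$. Use $\kappa_1\kappa_2\ge\varepsilon F^2$ both here and in the term $\kappa_{3-i}(\nabla_iF)^2/F$ produced by \eqref{equ-property3}. The net coefficient of $(\nabla_iF)^2$ is then $\frac{\varepsilon}{\kappa_i}(F-2\dot F^i\kappa_i)$.

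Its nonnegativity is the idea missing from your plan. We have $F=\sum_k\dot F^k\kappa_k\ge2\dot F^i\kappa_i$ for $i=1,2$, because \eqref{eq:old-property2} makes $\dot F^k\kappa_k$ nondecreasing in $k$ and there is a third index. This, and not only the final algebra, is where $n\ge3$ enters. With your sign, the same bookkeeping would require $F\ge6\dot F^i\kappa_i$, which fails for $F=\frac1nH$, $n=3$, at an umbilic point. Likewise, the cross terms $-2\dot F^i\nabla_ih_{11}\nabla_ih_{22}$ with $i\ge3$ are controlled only by completing the square with that good term:
\[
-\nabla_ih_{22}\nabla_ih_{11}+\varepsilon(\nabla_iF)^2\ge\varepsilon\bigl(\tfrac{F}{\kappa_1}\nabla_ih_{11}-\nabla_iF\bigr)^2,
\]
again using the critical condition and $\kappa_1\kappa_2\ge\varepsilon F^2$.
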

\begin{proof}
	The sectional curvature defines a smooth function on the Grassmannian bundle of two-dimensional subspaces of the tangent bundle $TM$. For convenience we lift it to a function on the orthonormal frame bundle $O(M)$ over $M$: Given a point $x\in M$, a time $t\geq 0$, and a frame $\mathbb{O}=\{e_1,e_2,\cdots,e_n\}$ for $T_x M$ which is orthonormal with respect to $g(x,t)$, we define
	\begin{equation*}
		G(x,t,\mathbb{O}) = h_{(x,t)}(e_1,e_1)h_{(x,t)}(e_2,e_2) - h_{(x,t)}(e_1,e_2)^2 - 1- \varepsilon F(x,t)^2,
	\end{equation*}
    where $\varepsilon\in(0,\varepsilon_0)$ is small enough such that $G>0$ at $t=0$. Here
    \begin{equation*}
    	\varepsilon_0 = \min\limits_{M_0}\dfrac{\kappa_1\kappa_2-1}{F^2}.
    \end{equation*}
    Consider a point $(x_0,t_0)$ and a frame $\mathbb{O}_0=\{\overline{e}_1,\overline{e}_2,\cdots,\overline{e}_n\}$ at which a new minimum of the function $G$ is attained, so that $$G(x,t,\mathbb{O})\geq G(x_0,t_0,\mathbb{O}_0)$$
    for all $x\in M$, all $t\in [0,t_0]$, and all $\mathbb{O}\in F(M)_{(x,t)}$. Since $\mathbb{O}_0$ achieves the minimum of $G$ over the fiber $F(M)_{(x_0,t_0)}$, the vectors $\overline{e}_1$ and $\overline{e}_2$ can be rotated to become eigenvectors of $h_{(x_0,t_0)}$ corresponding to $\kappa_1$ and $\kappa_2$, where $\kappa_1\leq\cdots\leq \kappa_n$ are the principal curvatures at $(x_0,t_0)$. Moreover, $G$ is invariant under rotation in the subspace orthogonal to $\overline{e}_1$ and $\overline{e}_2$. Consequently, we may assume without loss of generality that $h(\overline{e}_i,\overline{e}_i)=\kappa_i$ and $h(\overline{e}_i,\overline{e}_j)=0$ for $i\neq j$.
    
    We derive the differential inequality satisfied by $G$ at the minimal point at $(x_0,t_0,\mathbb{O}_0)$. Note that by the evolution equation \eqref{s2.evl-g} of the metric, the frame $\mathbb{O}(t)$ for $T_x M$ defined by
    \begin{equation*}
    	\dfrac{\mathrm{d}}{\mathrm{d}t}e_i(t) = F \mathcal{W}(e_i)
    \end{equation*}
    remains orthonormal with respect to $g(x,t)$ if $e_i(t_0)=\overline{e}_i$ for each $i$.  We first compute the 
    time derivative of $G$ at $(x_0,t_0,\mathbb{O}_0)$. Combining \eqref{s2.evlh2} and \eqref{equ-psi}, we have 
    \begin{align}
    	\left.\dfrac{\partial}{\partial t} G \right|_{(x_0,t_0,\mathbb{O}_0)} = & \kappa_1 \left(\frac{\partial}{\partial t}h_{22}+2h\left(\frac{\partial}{\partial t}e_2,e_2\right)\right) \nonumber\\
        &+ \kappa_2 \left(\frac{\partial}{\partial t}h_{11}+2h\left(\frac{\partial}{\partial t}e_1,e_1\right)\right) - 2 \varepsilon F \dfrac{\partial}{\partial t}F \notag\\
    	= & \kappa_1 \left(\frac{\partial}{\partial t}h_{22}+2F\kappa_2^2\right) \nonumber\\
        &+ \kappa_2 \left(\frac{\partial}{\partial t}h_{11}+2F\kappa_1^2\right) - 2 \varepsilon F \dfrac{\partial}{\partial t}F \notag\\
        = &  \kappa_1 \dot{F}^{k\ell} \nabla_k\nabla_{\ell} h_{22} + \kappa_2 \dot{F}^{k\ell}\nabla_k\nabla_{\ell} h_{11} \nonumber\\
        &+ \kappa_1\ddot{F}^{k\ell,pq}\nabla_2 h_{k\ell}\nabla_2 h_{pq} + \kappa_2 \ddot{F}^{k\ell,pq}\nabla_1 h_{k\ell}\nabla_1 h_{pq} \nonumber\\
    	& + 2 \left( \dot{F}^{k\ell}(h^2)_{k\ell} + \dot{F}^{k\ell}g_{k\ell} \right)\kappa_1\kappa_2 - 2 F (\kappa_1+\kappa_2) \nonumber\\
    	& - 2\varepsilon F \left( \dot{F}^{k\ell} \nabla_k \nabla_{\ell} F + \left( \dot{F}^{k\ell}(h^2)_{k\ell} - \dot{F}^{k\ell}g_{k\ell} \right)F  \right).
    	\label{equ-Gt-2}
    \end{align}
    
    The zero-order terms in \eqref{equ-Gt-2} satisfy
    \begin{align}
    	& 2 \left( \dot{F}^{k\ell}(h^2)_{k\ell} + \dot{F}^{k\ell}g_{k\ell} \right)\kappa_1\kappa_2 - 2 F (\kappa_1+\kappa_2) - 2 \varepsilon F^2 \left( \dot{F}^{k\ell}(h^2)_{k\ell} - \dot{F}^{k\ell}g_{k\ell} \right) \nonumber\\
    	=& 2 \sum\limits_{k=1}^n \dot{F}^k \kappa_k^2 G + 2 \sum\limits_{k=1}^n \dot{F}^k (\kappa_k-\kappa_1)(\kappa_k-\kappa_2) + 2 \varepsilon F^2 \dot{F}^{k\ell}g_{k\ell} ~
    	\geq ~ 0.   \label{equ-zero-2}
    \end{align}
    To estimate the Hessian terms in \eqref{equ-Gt-2}, we consider the second derivatives of $G$ along a curve on $O(M)$ defined as follows: We let $\gamma$ be any geodesic of $g(t_0)$ in $M$ with $\gamma(0)=x_0$, and define a frame $\mathbb{O}(s)=(e_1(s),e_2(s),\cdots,e_n(s))$ at $\gamma(s)$ by taking $e_i(0)=\overline{e}_i$ for each $i$, and
    \begin{equation*}
    	\nabla_s e_i(s) = \Gamma_{ij}e_j(s)
    \end{equation*}
    for some constant antisymmetric matrix $\Gamma$. Then we compute
    \begin{align}
    	\left.\dfrac{\mathrm{d}^2}{\mathrm{d}s^2}G(x(s),t_0,\mathbb{O}(s))\right|_{s=0} = & \kappa_2 \nabla_s^2 h_{11} + \kappa_1 \nabla_s^2 h_{22} + 2 \left( \nabla_s h_{22}\nabla_s h_{11} - (\nabla_s h_{12})^2 \right) \nonumber\\
    	& + 4 \sum\limits_{p>2} \Gamma_{1p}\kappa_2 \nabla_s h_{1p} + 4 \sum\limits_{p>2} \Gamma_{2p}\kappa_1 \nabla_s h_{2p} \nonumber\\
    	&+ 2 \sum\limits_{p>2} \Gamma_{1p}^2 \kappa_2(\kappa_p-\kappa_1) + 2 \sum\limits_{p>2} \Gamma_{2p}^2 \kappa_1 (\kappa_p-\kappa_2) \nonumber\\
    	& - 2 \varepsilon F \nabla_s^2 F - 2 \varepsilon (\nabla_s F)^2.  
    	\label{equ-d2G-2}
    \end{align}
    Since $G$ has a minimum at $(x_0,t_0,\mathbb{O}_0)$, the right hand side of \eqref{equ-d2G-2} is nonnegative for any choice of $\Gamma$. The optimal choice of $\Gamma$, which minimizes the expression \eqref{equ-d2G-2}, is given by
    \begin{equation*}
    	\Gamma_{kp}=\begin{cases}
    		-\dfrac{\nabla_s h_{kp}}{\kappa_p-\kappa_k},&k\in\{1,2\},p>2,\\
    		\dfrac{\nabla_s h_{kp}}{\kappa_k-\kappa_p},&p\in\{1,2\},k>2,\\
    		0,& \mbox{otherwise}.
    	\end{cases}
    \end{equation*}
    Thus, minimizing over $\Gamma$ in \eqref{equ-d2G-2} gives
    \begin{align}
    	0 \leq & \kappa_2 \nabla_s^2 h_{11} + \kappa_1 \nabla_s^2 h_{22} + 2 \left( \nabla_s h_{22}\nabla_s h_{11} - (\nabla_s h_{12})^2  \right) \nonumber\\
    	& - 2 \sum\limits_{p>2} \dfrac{\kappa_2}{\kappa_p-\kappa_1}(\nabla_s h_{1p})^2 - 2 \sum\limits_{p>2} \dfrac{\kappa_1}{\kappa_p - \kappa_2} (\nabla_s h_{2p})^2 \nonumber \\
    	& - 2 \varepsilon F \nabla_s^2 F - 2 \varepsilon (\nabla_s F)^2,
    	\label{equ-2nd-2}
    \end{align}
    with the terms on the second line regarded as vanishing if the denominators vanish (since the corresponding component of $\nabla h$ vanishes in that case). Using the formula \eqref{equ-secf}, we rewrite the gradient terms in \eqref{equ-Gt-2} as follows:
    \begin{align}
       \ddot{F}^{k\ell,pq}\nabla_2 h_{k\ell}\nabla_2 h_{pq}= &\sum\limits_{i,j} \ddot{F}^{ij}\nabla_2 h_{ii}\nabla_2 h_{jj} + 2\sum\limits_{i< j} \dfrac{\dot{F}^i -\dot{F}^{j}}{\kappa_i-\kappa_{j}}(\nabla_2 h_{ij})^2, \\
       \ddot{F}^{k\ell,pq}\nabla_1 h_{k\ell}\nabla_1 h_{pq} =&\sum\limits_{i,j} \ddot{F}^{ij}\nabla_1 h_{ii}\nabla_1 h_{jj} + 2\sum\limits_{i< j} \dfrac{\dot{F}^i -\dot{F}^{j}}{\kappa_i-\kappa_{j}} (\nabla_1 h_{ij})^2. \label{equ-HessF}
    \end{align}
    
    Substituting \eqref{equ-zero-2} and \eqref{equ-2nd-2} -- \eqref{equ-HessF} into \eqref{equ-Gt-2}, we obtain
    \begin{align}
    	\dfrac{\partial}{\partial t}G \bigg|_{(x_0,t_0,\mathbb{O}_0)} \geq &  \kappa_2 \left( \sum\limits_{i,j} \ddot{F}^{ij}\nabla_1 h_{ii}\nabla_1 h_{jj} + 2\sum\limits_{i< j} \dfrac{\dot{F}^i -\dot{F}^{j}}{\kappa_i-\kappa_{j}} (\nabla_1 h_{ij})^2 \right) \notag\\
    	& + \kappa_1 \left( \sum\limits_{i,j} \ddot{F}^{ij}\nabla_2 h_{ii}\nabla_2 h_{jj} + 2\sum\limits_{i< j} \dfrac{\dot{F}^i -\dot{F}^{j}}{\kappa_i-\kappa_{j}}(\nabla_2 h_{ij})^2 \right) \notag\\
    	& - 2 \sum\limits_{i=1}^n \dot{F}^i \left( \nabla_i h_{22}\nabla_i h_{11} - (\nabla_i h_{12})^2  \right) \nonumber\\
    	& + 2 \sum\limits_{i=1}^n \dot{F}^i \left( \sum\limits_{p>2} \dfrac{\kappa_2}{\kappa_p-\kappa_1}(\nabla_i h_{1p})^2 + \sum\limits_{p>2} \dfrac{\kappa_1}{\kappa_p-\kappa_2} (\nabla_i h_{2p})^2  \right) \nonumber \\
    	& + 2 \varepsilon \sum\limits_{i=1}^n \dot{F}^i (\nabla_i F)^2. \label{equ-ptg22}
    \end{align}

  Using \eqref{equ-property3} in Assumption \ref{assumption}, we estimate the terms involving second order derivatives of $F$ as follows 
    \begin{align}
        \sum\limits_{i,j} \ddot{F}^{ij}\nabla_1 h_{ii}\nabla_1 h_{jj}\geq &\dfrac{(\nabla_1 F)^2}{F} - \sum\limits_{i=1}^n \dfrac{\dot{F}^i}{\kappa_i}(\nabla_1 h_{ii})^2,\label{s3.d2F}\\
          \sum\limits_{i,j} \ddot{F}^{ij}\nabla_2 h_{ii}\nabla_2 h_{jj}\geq &\dfrac{(\nabla_2 F)^2}{F} - \sum\limits_{i=1}^n \dfrac{\dot{F}^i}{\kappa_i}(\nabla_2 h_{ii})^2. \label{s3.d2Fb}
    \end{align}
Under the Assumption \ref{assumption}, by Corollary \ref{lem.cond-ii} we have
\[
 \big(\dot F^i\kappa_i-\dot F^j\kappa_j\big)(\kappa_i-\kappa_j)\ge 0
\qquad \text{for all } i\neq j.   
\]
This implies that for any $i\neq j$,
    \begin{equation*}
    	\dfrac{\dot{F}^i-\dot{F}^j}{\kappa_i - \kappa_j} + \dfrac{\dot{F}^i}{\kappa_j}=\dfrac{\dot{F}^i\kappa_i-\dot{F}^j\kappa_j}{(\kappa_i-\kappa_j)\kappa_j} \geq 0.
    \end{equation*}
    Then we estimate
    \begin{align}
    	2 \sum\limits_{i<j} \dfrac{\dot{F}^i - \dot{F}^j}{\kappa_i-\kappa_j}(\nabla_1 h_{ij})^2
    	=& 2\dfrac{\dot{F}^1-\dot{F}^2}{\kappa_1-\kappa_2}(\nabla_2 h_{11})^2 + 2 \sum\limits_{\substack{1\leq i<j\leq n\\j>2}}\dfrac{\dot{F}^i -\dot{F}^j}{\kappa_i -\kappa_j}(\nabla_1 h_{ij})^2 \notag\\
    	\geq& -\left(\dfrac{\dot{F}^1}{\kappa_2}+\dfrac{\dot{F}^2}{\kappa_1}\right)(\nabla_2 h_{11})^2 - 2 \sum\limits_{i=1}^n\sum\limits_{\substack{j>i\\j>2}}\dfrac{\dot{F}^i}{\kappa_j}(\nabla_1 h_{ij})^2.
    	\label{equ-estd1}
    \end{align}
    Similarly,
    \begin{align}
    	2 \sum\limits_{i<j} \dfrac{\dot{F}^i - \dot{F}^j}{\kappa_i-\kappa_j}(\nabla_2 h_{ij})^2 	=& 2\dfrac{\dot{F}^1-\dot{F}^2}{\kappa_1-\kappa_2}(\nabla_1 h_{22})^2 + 2 \sum\limits_{\substack{1\leq i<j\leq n\\j>2}}\dfrac{\dot{F}^i -\dot{F}^j}{\kappa_i -\kappa_j}(\nabla_2 h_{ij})^2 \notag\\
    	\geq& -\left(\dfrac{\dot{F}^1}{\kappa_2}+\dfrac{\dot{F}^2}{\kappa_1}\right)(\nabla_1 h_{22})^2 - 2 \sum\limits_{i=1}^n\sum\limits_{\substack{j>i\\j>2}}\dfrac{\dot{F}^i}{\kappa_j}(\nabla_2 h_{ij})^2.
    	\label{equ-estd2}
    \end{align}
Substituting \eqref{s3.d2F} -- \eqref{equ-estd2} into \eqref{equ-ptg22}, we obtain
    \begin{align}
    	\left.\dfrac{\partial}{\partial t}G\right|_{(x_0,t_0,\mathbb{O}_0)} \geq & \kappa_2 \left( \dfrac{(\nabla_1 F)^2}{F} - \sum\limits_{i=1}^n \dfrac{\dot{F}^i}{\kappa_i}(\nabla_1 h_{ii})^2 \right)\nonumber\\
        &-\kappa_2\left(\left(\dfrac{\dot{F}^1}{\kappa_2}+\dfrac{\dot{F}^2}{\kappa_1}\right)(\nabla_2 h_{11})^2 + 2 \sum\limits_{i=1}^n\sum\limits_{\substack{j>i\\j>2}}\dfrac{\dot{F}^i}{\kappa_j}(\nabla_1 h_{ij})^2  \right) \nonumber\\
    	& +  \kappa_1 \left( \dfrac{(\nabla_2 F)^2}{F} - \sum\limits_{i=1}^n \dfrac{\dot{F}^i}{\kappa_{i}}(\nabla_2 h_{ii})^2\right)\nonumber\\
        &-\kappa_1\left( \left(\dfrac{\dot{F}^1}{\kappa_2}+\dfrac{\dot{F}^2}{\kappa_1}\right)(\nabla_1 h_{22})^2 + 2 \sum\limits_{i=1}^n\sum\limits_{\substack{j>i\\j>2}}\dfrac{\dot{F}^i}{\kappa_j}(\nabla_2 h_{ij})^2 \right) \notag\\
    	& + 2\sum\limits_{i=1}^n \dot{F}^i \left( -\nabla_i h_{22}\nabla_i h_{11} + (\nabla_i h_{12})^2 \right) \notag\\
    	& + 2 \sum\limits_{i=1}^n \dot{F}^{i}\left( \sum\limits_{j>2}\dfrac{\kappa_2}{\kappa_j-\kappa_1}(\nabla_i h_{1j})^2 +\sum\limits_{j>2}\dfrac{\kappa_1}{\kappa_j-\kappa_2}(\nabla_i h_{2j})^2 \right) \notag\\
    	& + 2\varepsilon \sum\limits_{i=1}^n \dot{F}^i (\nabla_i F)^2. \notag\\
    	\geq & \kappa_2\dfrac{(\nabla_1 F)^2}{F} + \kappa_1 \dfrac{(\nabla_2 F)^2}{F} \nonumber\\
        &- \sum\limits_{i=1}^2 \dfrac{\kappa_2}{\kappa_1}\dot{F}^i (\nabla_i h_{11})^2 - \sum\limits_{i=1}^2 \dfrac{\kappa_1}{\kappa_2}\dot{F}^i (\nabla_i h_{22})^2 \nonumber\\
    	& - 2 \sum\limits_{i=1}^n \dot{F}^i \nabla_i h_{22}\nabla_i h_{11} + 2\varepsilon \sum\limits_{i=1}^n \dot{F}^i (\nabla_i F)^2 \nonumber\\
    	= & \kappa_2 \dfrac{(\nabla_1 F)^2}{F} + \kappa_1 \dfrac{(\nabla_2 F)^2}{F} - \sum\limits_{i=1}^2 \dfrac{\dot{F}^i}{\kappa_1 \kappa_2}(\kappa_2 \nabla_i h_{11}+ \kappa_1 \nabla_i h_{22})^2 \nonumber\\
    	& - 2 \sum\limits_{i=3}^n \dot{F}^i \nabla_i h_{22}\nabla_i h_{11} + 2\varepsilon \sum\limits_{i=1}^n \dot{F}^i (\nabla_i F)^2.  \label{equ-2ndgauss2}
    \end{align}
    
    Since $(x_0,\mathbb{O}_0)$ is a minimum point of $G$ at time $t_0$, we have $\nabla_i G=0$ for $i=1,2,\cdots,n$, which yields the critical condition
    \begin{equation}
    	\kappa_2 \nabla_i h_{11} + \kappa_1 \nabla_i h_{22} =2\varepsilon F\nabla_i F,\ \ i=1,\cdots,n.
    	\label{equ-critical-2}
    \end{equation} 
	The fact that $G\geq 0$ at $(x_0,\mathbb{O}_0)$ implies 
    \begin{equation}\label{s4.Ggeq0}
        \kappa_1\kappa_2 \geq 1+\varepsilon F^2\geq \varepsilon F^2
    \end{equation}
holds at the minimum point $x_0$.    Using \eqref{equ-critical-2} and \eqref{s4.Ggeq0},  we estimate the terms in \eqref{equ-2ndgauss2} involving $i=1,2$ as follows:
    \begin{align}
    	&\kappa_2 \dfrac{(\nabla_1 F)^2}{F} + \kappa_1 \dfrac{(\nabla_2 F)^2}{F} - \sum\limits_{i=1}^2 \dfrac{\dot{F}^i}{\kappa_1 \kappa_2}(\kappa_2 \nabla_i h_{11}+ \kappa_1 \nabla_i h_{22})^2 + 2\varepsilon \sum\limits_{i=1}^2 \dot{F}^i (\nabla_i F)^2 \nonumber\\
    	\geq & \sum\limits_{i=1}^2 \dfrac{\varepsilon F^2}{\kappa_i}\dfrac{(\nabla_i F)^2}{F} - \dfrac{4\varepsilon^2 F^2}{\kappa_1 \kappa_2} \sum\limits_{i=1}^2 \dot{F}^i (\nabla_i F)^2 +  2\varepsilon \sum\limits_{i=1}^2 \dot{F}^i (\nabla_i F)^2 \nonumber\\
    	\geq & \sum\limits_{i=1}^2 \dfrac{\varepsilon F^2}{\kappa_i}\dfrac{(\nabla_i F)^2}{F} - 2\varepsilon \sum\limits_{i=1}^2 \dot{F}^i (\nabla_i F)^2 \notag\\
    	= & \sum\limits_{i=1}^2 \frac{\varepsilon}{\kappa_i}\left(F-2\dot{F}^{i}\kappa_i\right)(\nabla_i F)^2 \geq 0,
    	\label{equ-est1}
    \end{align}
    where in the last line we used the inequality
    \begin{equation*}
    	F = \sum\limits_{i=1}^n \dot{F}^i \kappa_i \geq 2 \dot{F}^j\kappa_j,\ \ j=1,2,
    \end{equation*}
	which follows from \eqref{eq:old-property2} in Corollary \ref{lem.cond-ii} and $n\geq 3$. For the remaining terms in \eqref{equ-2ndgauss2} with $i\geq 3$, we again employ the critical condition \eqref{equ-critical-2} and \eqref{s4.Ggeq0}.  For each such $i$ we have
    \begin{align*}
    	& -\nabla_i h_{22}\nabla_i h_{11} + \varepsilon (\nabla_i F)^2 \nonumber\\
    	=& \dfrac{\kappa_2}{\kappa_1}(\nabla_i h_{11})^2 - 2\varepsilon\dfrac{F}{\kappa_1}\nabla_i F \nabla_i h_{11} + \varepsilon(\nabla_i F)^2 \notag\\
    	\geq & \dfrac{\varepsilon F^2}{\kappa_1^2}(\nabla_i h_{11})^2 - 2\varepsilon\dfrac{F}{\kappa_1}\nabla_i F \nabla_i h_{11} + \varepsilon(\nabla_i F)^2\notag\\
    	=& \varepsilon \left( \dfrac{F}{\kappa_1}\nabla_i h_{11} - \nabla_i F \right)^2\notag~
    	\geq  0.
    \end{align*}
   Consequently,
    \begin{equation}
    	-2\sum\limits_{i=3}^n \dot{F}^i \nabla_i h_{22}\nabla_i h_{11} + 2\varepsilon \sum\limits_{i=3}^n \dot{F}^i (\nabla_i F)^2 \geq 0.
    	\label{equ-est2}
    \end{equation}

    Substituting \eqref{equ-est1} and \eqref{equ-est2} into \eqref{equ-2ndgauss2}, we conclude
    \begin{equation*}
    	\left.\frac{\partial}{\partial t}G\right|_{(x_0,t_0,\mathbb{O}_0)}\geq 0.
    \end{equation*}
    By the maximum principle, the minimum of $G$ cannot decrease. Since initially $G>0$, it remains positive for all $t>0$. Then using the property \eqref{equ-f-lower} in Proposition \ref{lem:gm-lower-bound} for the function $F$,  $M_t$ satisfies
    \begin{equation*}
    	\kappa_1 \kappa_2 >1+ \varepsilon F^2 \geq \varepsilon \left(\prod_{i=1}^n \kappa_i\right)^{\frac{2}{n}} \geq \varepsilon \kappa_1^{\frac{n-2}{n}} \kappa_2 \kappa_n^{\frac{2}{n}}.
    \end{equation*}
  From this, we obtain the desired pinching estimate 
    \[\kappa_n\leq C\kappa_1\] 
    with $C=\varepsilon^{-n/2}$, which depends only on $n$ and $M_0$.
\end{proof}

\section{Proof of Theorem \ref{thm-main}}\label{sec5}
Let $F$ satisfy Assumption \ref{assumption} and denote by $[0,T)$ the maximal existence interval of the flow \eqref{equ-f}. It follows from \cite[Proposition 1.1]{AC17} that $T<\infty$. In this section, we prove that the solution $M_t$ of the flow \eqref{equ-f} contracts to a point as $t\to T$, and a rescaled solution converges smoothly to a geodesic sphere. 

\subsection{Contraction to a point}

We first prove that the solution $M_t$ of \eqref{equ-f} remains smooth and contracts to a point as $t\to T$.

\begin{proposition} \label{lem-point}
Let $F$ satisfy Assumption \ref{assumption}. If the initial hypersurface $M_0$ has positive sectional curvature, then along the flow \eqref{equ-f} in hyperbolic space $\mathbb{H}^{n+1}$, the evolving hypersurfaces $M_t$ remain smooth for all $t\in[0,T)$ and contract to a point as $t\to T$.
\end{proposition}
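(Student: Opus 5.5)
The argument follows the standard route of Andrews and Gerhardt, with the pinching estimate of Proposition \ref{s4.lem-pinc} as the key input. Throughout, $T<\infty$ by \cite[Proposition 1.1]{AC17}. Lemma \ref{s3.lem-psc} gives $\kappa_i\kappa_j>1$ on $[0,T)$, so each $M_t$ is strictly convex. Hence $F$ is evaluated in $\Gamma_+$ and the flow stays uniformly parabolic wherever the curvature is bounded.

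\emph{Step 1: curvature blows up at $T$.} Suppose instead that $\sup_{M\times[0,T)}F<\infty$.
\begin{itemize}
\item By \eqref{equ-pinching}, $\kappa_n\le C\kappa_1$, so all $\kappa_i$ are bounded above.
\item Since $\kappa_1\kappa_n\ge\kappa_1\kappa_2>1$, we get $\kappa_1^2\ge C^{-1}\kappa_1\kappa_n>C^{-1}$. So the $\kappa_i$ also lie in a compact subset of $\Gamma_+$.
\item Thus $F$ and $\dot F^{ij}$ are uniformly controlled and the equation is uniformly parabolic.
\item By Lemma \ref{lem:inverse-concave}, $F$ is inverse concave. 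I would therefore write $M_t$ as radial graphs over $\mathbb{S}^n$ about a fixed interior point (convexity gives uniform gradient bounds). Then I would invoke the $C^{2,\alpha}$ estimates for inverse concave speeds (as in Andrews \cite{And07}, Gerhardt \cite{Ger15}, or via the dual flow), or the two-dimensional-type Hölder estimate for $\dot F^{ij}$-parabolic equations of convex hypersurfaces.
\item Schauder theory then gives all higher derivatives, so the flow extends past $T$. This contradicts maximality.
\end{itemize}
Hence $\max_{M_t}F\to\infty$ as $t\to T$, and by pinching $\kappa_1\to\infty$ somewhere.

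\emph{Step 2: diameter goes to zero.} The goal is to show $\min_{M_t}\kappa_1\to\infty$ and deduce $\operatorname{diam}(M_t)\to0$. A bounded strictly convex hypersurface in $\mathbb{H}^{n+1}$ is contained in the region enclosed by $M_t$, which shrinks monotonically because $F>0$. Let $\rho_+(t)$ and $\rho_-(t)$ be the circumradius and inradius.
\begin{itemize}
\item Pinching plus a Harnack-type comparison controls the ratio of extremal curvatures across $M_t$. Concretely, following Andrews \cite{And94,And07}, pinching implies $\rho_+\le C\rho_-$ whenever $\rho_+$ is small.
\item Inscribed geodesic spheres evolve by the spherical solution and stay inside by the comparison principle. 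So $\rho_-(t)$ is bounded below by the radius of a shrinking sphere with extinction time $>t$.
\item Conversely, a circumscribed sphere shows $T$ is at most the extinction time of the circumsphere. This gives $\rho_+(t)\to0$ as $t\to T$ once one shows $\rho_-(t)\to 0$.
\item For the last point I would argue as follows. If $\rho_-$ stayed bounded below, then by pinching the curvature would be bounded. Indeed, a strictly convex hypersurface with $\kappa_n\le C\kappa_1$ containing a ball of radius $\rho_->0$ has $\kappa_1$ bounded above, using a Blaschke-type rolling-ball argument in hyperbolic space. That would contradict Step 1.
\end{itemize}
The enclosed convex sets are nested with diameters tending to zero, so they converge to a single point $p$.

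\emph{Main obstacle.} The hard step is the regularity in Step 1: obtaining $C^{2,\alpha}$ bounds for a fully nonlinear $F$ that is not assumed concave. Here I would rely on inverse concavity (Lemma \ref{lem:inverse-concave}), rewriting the flow for the inverse Weingarten map, where the operator is concave, and then applying Krylov–Safonov/Evans–Krylov. I expect the hyperbolic lower-order terms to be harmless because the curvature is bounded between positive constants in that regime. The rolling-ball comparison between $\rho_\pm$ and the curvature under pinching in $\mathbb{H}^{n+1}$ is the second delicate point. I would handle it by comparison with horospheres and geodesic spheres, using $\kappa_1\kappa_2>1$.
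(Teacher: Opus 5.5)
Your Step 1 is sound and matches the regularity part of the paper's argument: bounded $F$ gives curvatures in a compact subset of $\Gamma_+$, hence uniform parabolicity, hence higher estimates via inverse concavity. Your bound $\kappa_1^2\ge C^{-1}\kappa_1\kappa_n>C^{-1}$ is a neat substitute for the paper's maximum-principle bound $F\ge\min_{M_0}F$.

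\textbf{The gap} is the last bullet of Step 2. You claim that a strictly convex hypersurface with $\kappa_n\le C\kappa_1$ enclosing a ball of radius $\rho>0$ has curvature bounded above. This is false as a static statement. Blaschke's rolling theorem runs the other way: a curvature bound lets a ball roll inside, not conversely.

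For a counterexample, take hypersurfaces of revolution whose meridian curvature $\phi'$ and rotational curvature $\sin\phi/x$ satisfy $\phi'=\lambda\sin\phi/x$ with $\lambda\in(0,1)$. Near the axis $\phi\approx a s^{\lambda}$, so both curvatures blow up like $s^{\lambda-1}$ at the poles while their ratio stays exactly $\lambda$. By the reflection symmetry of the ODE at $\phi=\pi/2$, the solution closes up into a convex body of fixed size. Rounding off the poles by tiny caps of curvature $K$ gives smooth convex hypersurfaces with:
\begin{itemize}
\item uniformly bounded pinching ratio,
\item inradius bounded below,
\item $\max\kappa\ge K\to\infty$.
\end{itemize}
Scaled-down copies transplanted into $\mathbb{H}^{n+1}$ even satisfy $\kappa_i\kappa_j>1$. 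The same examples show that pinching does not control $\max_{M_t}F/\min_{M_t}F$ either, so the ``Harnack-type comparison'' in your first bullet is also unavailable. Hence no comparison with spheres or horospheres yields your bound. The contradiction with Step 1 is never reached, and you have no proof that $\rho_-(t)\to0$, i.e.\ of contraction to a point.

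\textbf{What is missing} is a genuinely parabolic estimate, namely Tso's technique. This is exactly Lemma \ref{s4.lem-F} in the paper:
\begin{itemize}
\item Assume $B_\rho(x_0)\subset\Omega_t$ for $t\in[0,t_1]$, and write $M_t$ as radial graphs about $x_0$.
\item Let $\chi=\sinh u\,\langle\partial_r,\nu\rangle\ge\sinh\rho$ be the support function, and apply the maximum principle to $\varphi=F/(\chi-\frac12\sinh\rho)$.
\item The inverse-concavity inequality \eqref{equ-inv2} produces a term $-(\frac12\sinh\rho)^2\varphi^3$ in the evolution of $\varphi$. This gives $F\le C(n,\rho,M_0)$, independently of $t_1$.
\end{itemize}

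With this in hand your outline closes. Suppose $\rho_-(t)\ge\rho_0>0$ on $[0,T)$. Since the $\Omega_t$ are nested, compactness gives a single ball $B_{\rho_0/2}(p)$ inside all of them. Then $F$ is bounded on $[0,T)$, contradicting Step 1, so $\rho_-\to0$. Finally, $\rho_+\le C\rho_-$, obtained via the Klein model and Andrews' Euclidean result, gives $\rho_+\to0$.
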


Firstly, we prove the lower bound of the speed function $F$ and the smallest principal curvature $\kappa_1$ along the flow \eqref{equ-f}.
\begin{lemma}
	Let $M_t$, $t\in[0,T)$ be a smooth solution to the flow \eqref{equ-f} in $\mathbb{H}^{n+1}$. If $M_0$ has positive sectional curvature, then we have
	\begin{equation}
		\label{equ-psilbd}
		F\geq \min\limits_{M_0}F
	\end{equation}
    and
    \begin{equation}
    	\label{equ-k1low}
    	\kappa_i \geq C(n,M_0),\quad \forall~i=1,\cdots,n
    \end{equation}
    on $M_t$ for $t\in[0,T)$.
\end{lemma}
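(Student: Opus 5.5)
The plan is to prove \eqref{equ-psilbd} by the maximum principle applied to the evolution equation \eqref{equ-psi} of $F$, and then to combine it with the pinching estimate of Proposition \ref{s4.lem-pinc} to get \eqref{equ-k1low}.

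\emph{Lower bound for $F$.} By Lemma \ref{s3.lem-psc}, $M_t$ has positive sectional curvature for all $t$. Since $n\ge 3$, this gives $\kappa_i\kappa_j>1$ for all $i\neq j$, and together with convexity at $t=0$ (which is preserved) it yields $\kappa_1>0$. So all $\kappa_i>0$, $F$ is defined, and $\dot F^i>0$. The zero-order term in \eqref{equ-psi} is
\[
\Big(\sum_k \dot F^k(\kappa_k^2-1)\Big)F .
\]
To see that it is nonnegative, pair indices: for $k\ge 2$ we have $\kappa_k\kappa_1>1$. I first try the inverse-concavity inequality \eqref{equ-inv2}, which gives $\sum\dot F^k\kappa_k^2\ge F^2$, together with $\sum \dot F^k\ge 1$; the latter follows from concavity-type arguments, or simply from $\sum\dot F^k\le$ something, but that is not obviously comparable. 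A cleaner route is to write $\kappa_k^2-1 \ge \kappa_k^2 - \kappa_k\kappa_j$ for any $j\ne k$. Each $k$ has some $j\neq k$ with $\kappa_k\kappa_j>1$. For $k\ge 2$ take $j=1$, so $\kappa_k^2-1\ge \kappa_k(\kappa_k-\kappa_1)\ge 0$. For $k=1$ take $j=2$, so $\kappa_1^2-1\ge \kappa_1(\kappa_1-\kappa_2)$, which can be negative. The $k=1$ term must then be absorbed by the $k=2$ term $\dot F^2\kappa_2(\kappa_2-\kappa_1)$. By Corollary \ref{lem.cond-ii}, $\dot F^2\kappa_2\ge \dot F^1\kappa_1$ when $\kappa_2\ge\kappa_1$, so
\[
\dot F^1(\kappa_1^2-1)+\dot F^2(\kappa_2^2-1)\ge (\dot F^2\kappa_2-\dot F^1\kappa_1)(\kappa_2-\kappa_1)\ge 0 ,
\]
using $\kappa_1\kappa_2> 1$. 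The remaining terms with $k\ge3$ are nonnegative. Hence $\partial_t F\ge \dot F^{k\ell}\nabla_k\nabla_\ell F$, and the maximum principle gives \eqref{equ-psilbd}.

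\emph{Lower bound for the $\kappa_i$.} By \eqref{equ-pinching}, $\kappa_n\le C\kappa_1$. Monotonicity and homogeneity of $F$ with the normalization $F(1,\dots,1)=1$ give $F\le \kappa_n F(1,\dots,1)=\kappa_n\le C\kappa_1$. Therefore
\[
\kappa_i\ge\kappa_1\ge C^{-1}F\ge C^{-1}\min_{M_0}F,
\]
which is a constant depending only on $n$ and $M_0$.

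\textbf{Main obstacle.} The main step is the sign of the reaction term, in particular handling the possibly negative $\kappa_1^2-1$ term. The pairing with the second index via Corollary \ref{lem.cond-ii} handles it; I expect that to be the key point, and it uses only the preserved positive sectional curvature.
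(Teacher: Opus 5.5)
Your proposal is correct and follows the paper's proof essentially verbatim. You bound $\kappa_k^2-1\ge\kappa_k(\kappa_k-\kappa_1)$ for $k\ge2$ and $\kappa_1^2-1\ge\kappa_1(\kappa_1-\kappa_2)$, then absorb the $k=1$ term into the $k=2$ term via Corollary~\ref{lem.cond-ii}, and finish with $F\le\kappa_n\le C\kappa_1$ exactly as the paper does; the abandoned detour through \eqref{equ-inv2} is unnecessary and can simply be deleted.
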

\begin{proof}
To prove the lower bound \eqref{equ-psilbd}, we apply maximum principle to the evolution equation 
\begin{equation}\label{s5.equF}
\partial_t F =  \dot{F}^{k\ell}\nabla_k\nabla_{\ell}F + \left( \dot{F}^{ij}(h^2)_{ij}- \dot{F}^{ij}g_{ij} \right)F
\end{equation}
of $F$ (see \eqref{equ-psi}). We need to estimate the sign of the zero order terms in \eqref{s5.equF}. In a local orthonormal frame diagonalizing $h_{ij}$, one has
\[\dot F^{ij}(h^2)_{ij}-\dot F^{ij}g_{ij}
=\sum_{i=1}^n \dot F^i\kappa_i^2-\sum_{i=1}^n \dot F^i.
\]
Without loss of generality, we may assume that $\kappa_1\le \kappa_2\le \cdots \le \kappa_n$. By Lemma \ref{s3.lem-psc}, the evolving hypersurface $M_t$ has positive sectional curvature for $t\in [0,T)$. Then $\kappa_i\kappa_j>1$ for all $i\neq j$. In particular, we have 
\[
\kappa_1\kappa_i>1 \qquad \text{for all } i=2,\dots,n.
\]
Therefore,
\begin{align*}
		\sum\limits_{i=1}^n \dot{F}^i \kappa_i^2 - \sum\limits_{i=1}^n \dot{F}^i
		\geq &  \sum\limits_{i=1}^n \dot{F}^i\kappa_i^2 - \sum\limits_{i=2}^n \dot{F}^i\kappa_i\kappa_1 - \dot{F}^1\kappa_1\kappa_2 \notag\\
		= & \sum\limits_{i=3}^n \dot{F}^i \kappa_i(\kappa_i-\kappa_1) + \left( \dot{F}^1\kappa_1 - \dot{F}^2\kappa_2 \right)(\kappa_1-\kappa_2).
	\end{align*}
The first term on the right-hand side is nonnegative, since $\dot F^i>0$ and $\kappa_i\ge \kappa_1$ for $i\ge 3$. The second term is also nonnegative, by applying \eqref{eq:old-property2} in Corollary \ref{lem.cond-ii} with $(i,j)=(1,2)$. Therefore, 
\[\dot F^{ij}(h^2)_{ij}-\dot F^{ij}g_{ij}\geq 0,\]
and the maximum principle applied to the evolution equation \eqref{s5.equF} implies the lower bound \eqref{equ-psilbd} of $F$.
    
Since $F$ is monotone increasing with respect to each argument and is normalized such that $F(1,\cdots,1)=1$, we have 
    \[F=F(\kappa_1,\cdots,\kappa_n)\leq \kappa_nF(1,\cdots,1)=\kappa_n.\] 
    This together with the pinching estimate \eqref{equ-pinching} and \eqref{equ-psilbd} implies that
    \begin{equation*}
    	\kappa_1 \geq C^{-1}\kappa_n \geq C^{-1}F \geq C^{-1}\min\limits_{M_0}F =: C(n,M_0).
    \end{equation*}
\end{proof}

Let $\rho_+(t)$ and $\rho_-(t)$ be the outer radius and inner radius of the domain $\Omega_t$ enclosed by $M_t$ respectively, defined by
\begin{align*}
	\rho_+(t) &= \inf \left\lbrace \rho:\Omega_t\subset B_{\rho}(p)\ \mbox{for some}\ p\in\mathbb{H}^{n+1} \right\rbrace,\\
	\rho_-(t) & = \sup\left\{ \rho:B_{\rho}(p) \subset \Omega_t\ \mbox{for some}\ p\in\mathbb{H}^{n+1} \right\}. 
\end{align*}
By the pinching estimate for the principal curvatures \eqref{equ-pinching}, we can project the evolving domains $\Omega_t$ enclosed by $M_t$ onto domains $\hat{\Omega}_t$ in the Euclidean unit ball $B_1(0)$ via the Klein model of hyperbolic space. This projection preserves convexity, and hence each $\hat{\Omega}_t$ remains strictly convex. Moreover, there is an explicit relation between the principal curvatures $\kappa_i$ of $M_t$ and the principal curvatures $\hat{\kappa}_i$ of the corresponding hypersurface $\partial\hat{\Omega}_t$ in Euclidean space (see \cite[\S 5]{AW18}). Therefore, the pinching estimate \eqref{equ-pinching} yields a corresponding pinching estimate for $\hat{\kappa}_i$. Applying a result of Andrews \cite{And94} for pinched hypersurfaces in Euclidean space, we then obtain that the outer radius of $\hat{\Omega}_t$ is bounded above by a constant multiple of its inner radius. This in turn implies the following estimate relating the outer and inner radius of $\Omega_t$ (see \cite[\S 6]{Ger15} or \cite[\S 6]{Yu16}):
\begin{equation}
	\rho_+(t) \leq C\rho_-(t),\ \ \mbox{for}\ t\in[t_0,T),
	\label{equ-rhopin}
\end{equation}
where $t_0$ is sufficiently close to $T$.

Next we use the technique of Tso \cite{Tso85} to show that $F$ remains bounded as long as the flow \eqref{equ-f} encloses a non-vanishing volume. Assume that there exists a geodesic ball $B_{\rho}(x_0)\subset\Omega_t$ for $t\in[0,t_1]$, where $t_1\in [t_0,T)$. Since $M_t$ is strictly convex by \eqref{equ-k1low}, we can write $M_t=\mathrm{graph}~u(\cdot,t)$ as graphs in polar coordinates centered at $x_0$. Then $u\geq \rho$ for all $t\in [0,t_1]$. By the comparison principle, the latter hypersurface is contained in the earlier one, then we have an upper bound on $u\leq 2\rho_+(0)$ only depending on $M_0$. Denote by $\partial_r$ the gradient vector at $x\in M_t$ along the geodesic from $x_0$ to $x$. The support function of $M_t$ with respect to $x_0$ is defined by $\chi(x,t)= \sinh u(x,t)\langle \partial_r,\nu \rangle$. Due to the strict convexity of $M_t$ and $\rho\leq u \leq 2\rho_+(0)$, we have 
\begin{equation}
	\sinh\rho \leq \chi \leq\sinh(2\rho_+(0))\ \ \mbox{for\ all}\ t\in [0,t_1].
	\label{equ-supplbd}
\end{equation}
\begin{lemma}\label{s4.lem-F}
	Let $M_t$ be a solution to the flow \eqref{equ-f} whose initial hypersurface $M_0$ has positive sectional curvature. Let $t_1$ and $\rho$ be as above. Then there exists a constant $C$ depending only on $n$, $\rho$ and $M_0$ such that
	\begin{equation}
		F \leq C
		\label{equ-psiupp}
	\end{equation}
    on $M_t$ for $t\in [0,t_1]$. 
\end{lemma}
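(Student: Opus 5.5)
We follow Tso's technique: apply the maximum principle to the auxiliary function
\[
Z=\frac{F}{\chi-a},\qquad a=\tfrac12\sinh\rho ,
\]
so that $\chi-a\ge a>0$ on $[0,t_1]$ by \eqref{equ-supplbd}. The steps are: compute the evolution equation of $\chi$, combine it with \eqref{equ-psi} to get the evolution of $Z$, then show that at a large spatial maximum the zero-order terms force $Z$ to decrease.

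First, the support function. Since $\sinh r\,\partial_r$ is the gradient of the conformal Killing potential $\cosh r$, we have $\chi=\langle \sinh r\,\partial_r,\nu\rangle$. The standard computation along \eqref{equ-f} (as in \cite{And9402,Yu16}) uses degree-one homogeneity, $\dot F^{ij}h_{ij}=F$, and the identity $\nabla_i\nabla_j \chi = \cosh r\, h_{ij} - \chi (h^2)_{ij} + \langle \sinh r\,\partial_r, \nabla h_{ij}\rangle$ together with Codazzi. It gives
\[
\partial_t\chi=\dot F^{k\ell}\nabla_k\nabla_\ell\chi+\dot F^{k\ell}(h^2)_{k\ell}\,\chi-2F\cosh u .
\]
The cross terms involving $\nabla h$ cancel against $\langle \sinh r\,\partial_r,\nabla F\rangle$ coming from $\partial_t\nu=\nabla F$. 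Getting exactly this form of the zero-order terms is the delicate bookkeeping step.

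Next, the quotient. With $\mathcal L=\partial_t-\dot F^{k\ell}\nabla_k\nabla_\ell$, we get
\[
\mathcal L Z=\frac{\mathcal L F}{\chi-a}-\frac{F\,\mathcal L\chi}{(\chi-a)^2}+\frac{2}{\chi-a}\dot F^{k\ell}\nabla_k Z\nabla_\ell\chi .
\]
The gradient term vanishes at a spatial maximum of $Z$. Inserting the two evolution equations, the $\dot F^{k\ell}(h^2)_{k\ell}$ terms combine with coefficient
\[
\frac{F}{\chi-a}\Bigl(1-\frac{\chi}{\chi-a}\Bigr)=-\frac{aF}{(\chi-a)^2}.
\]
Dropping the favourable term $-\dot F^{k\ell}g_{k\ell}F$, at a maximum point we obtain
\[
\partial_t Z\le -\frac{aF}{(\chi-a)^2}\sum_k\dot F^k\kappa_k^2+\frac{2F^2\cosh u}{(\chi-a)^2}.
\]

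The key ingredient is the inverse concavity estimate \eqref{equ-inv2}, $\sum\dot F^k\kappa_k^2\ge F^2$, which holds because $F$ is inverse concave by Lemma \ref{lem:inverse-concave}. With it,
\[
\partial_t Z\le \frac{F^2}{(\chi-a)^2}\bigl(2\cosh u-aF\bigr)\le Z^2\bigl(2\cosh(2\rho_+(0))-aZ(\chi-a)\bigr).
\]
Since $\chi-a\ge a$ and $u\le 2\rho_+(0)$, the right side is negative once $Z>2\cosh(2\rho_+(0))/a^2$. Hence
\[
Z\le \max\Bigl\{\max_{M_0}Z,\;2\cosh(2\rho_+(0))/a^2\Bigr\}.
\]
Because $\chi-a\le\sinh(2\rho_+(0))$, this yields $F\le C(n,\rho,M_0)$ on $[0,t_1]$.

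The main obstacle is the exact form of the evolution equation for $\chi$ in $\mathbb{H}^{n+1}$, in particular confirming that the $\cosh u$ term enters with the good sign and that no uncontrolled $\dot F^{k\ell}g_{k\ell}$ contribution remains. If such a term does appear, it can be absorbed, since $\dot F^{k\ell}g_{k\ell}$ is bounded via the lower bound $\kappa_i\ge C$ in \eqref{equ-k1low}: we have $\sum \dot F^k\le C^{-2}\sum\dot F^k\kappa_k^2$, so the term is dominated by the good quadratic term once $Z$ is large.
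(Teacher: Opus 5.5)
Your proposal is correct and is essentially the paper's own proof. It uses the same Tso-type quotient $F/(\chi-\tfrac12\sinh\rho)$, the same evolution equation for the support function, and the same inverse-concavity estimate $\sum_k \dot F^k\kappa_k^2\ge F^2$. It also reaches the same threshold $2\cosh(2\rho_+(0))\,(\tfrac12\sinh\rho)^{-2}$ and closes with the same bound $F\le Z\chi$. The paper quotes the $\chi$ equation from Andrews--Wei; it contains no $\dot F^{k\ell}g_{k\ell}$ term, so the fallback in your last paragraph is unnecessary.
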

\begin{proof}
	Define a function
	\begin{equation*}
		\varphi = \dfrac{F}{\chi-\frac{1}{2}\sinh\rho},
	\end{equation*}
    which is well defined on $M_t$ for all $t\in[0,t_1]$. Recall that the support function $\chi$ satisfies (see \cite[Section 4]{AW18})
    \begin{equation}
    	\dfrac{\partial}{\partial t}\chi = \dot{F}^{k\ell}\nabla_k\nabla_{\ell}\chi + \chi\dot{F}^{k\ell}(h^2)_{k\ell}- 2\cosh u F.\label{equ-supp}
    \end{equation}
    Combining evolution equations \eqref{equ-psi} and \eqref{equ-supp}, we derive that $\varphi$ satisfies the evolution equation
    \begin{align*}
    	\dfrac{\partial}{\partial t}\varphi =& \dot{F}^{k\ell}\nabla_k\nabla_{\ell}\varphi + \dfrac{2}{\chi-\frac{1}{2}\sinh\rho}\dot{F}^{k\ell}\nabla_k\varphi\nabla_{\ell}\chi \notag\\
    	& + \left( 2\cosh u - \dfrac{1}{2}\sinh\rho\dfrac{\dot{F}^{k\ell}(h^2)_{k\ell}}{F} \right)\varphi^2 - \dot{F}^{k\ell}g_{k\ell}\varphi.
    \end{align*}
    Since $F$ is inverse concave (Lemma \ref{lem:inverse-concave}), using the inequality \eqref{equ-inv2} and the lower bound \eqref{equ-supplbd} of $\chi$, we have 
    \begin{equation*}
    	\dfrac{\dot{F}^{k\ell}(h^2)_{k\ell}}{F} \geq F = \varphi \cdot \left( \chi - \dfrac{1}{2}\sinh\rho \right) \geq \dfrac{1}{2}\varphi\sinh\rho.
    \end{equation*}
  We also have $\dot{F}^{k\ell}g_{k\ell}\geq 0$. Then 
    \begin{align}
    	\frac{\partial}{\partial t}\varphi 
    	\leq & \dot{F}^{k\ell}\nabla_k\nabla_{\ell}\varphi + \dfrac{2}{\chi-\frac{1}{2}\sinh\rho}\dot{F}^{k\ell}\nabla_k\varphi\nabla_{\ell}\chi \nonumber\\
    	& + \left( 2\cosh u - \left(\dfrac{1}{2}\sinh\rho \right)^2\varphi \right)\varphi^2.
    	\label{equ-phi}
    \end{align}
    
Set
\[
A:=2\cosh\bigl(2\rho_+(0)\bigr)\Bigl(\frac12\sinh\rho\Bigr)^{-2}.
\]
Since $u\le 2\rho_+(0)$ on $[0,t_1]$, we have $\cosh u\le \cosh(2\rho_+(0))$. We claim that
\[
\varphi\le \max\left\{\max_{M_0}\varphi,\;A\right\}
\qquad\text{on } M_t,\quad t\in[0,t_1].
\]
Indeed, otherwise there would exist a first time $t_*\in(0,t_1]$ and a point $x_*\in M_{t_*}$ such that $\varphi(x_*,t_*)>\max\{\max_{M_0}\varphi,A\}$ and $\varphi(\cdot,t_*)$ attains its spatial maximum at $x_*$. At $(x_*,t_*)$, we have
\[
\nabla\varphi=0,
\qquad
\dot F^{k\ell}\nabla_k\nabla_\ell\varphi\le 0,
\]
and thus by \eqref{equ-phi},
\[
0\le \frac{\partial}{\partial t}\varphi
\le \left(2\cosh u-\Bigl(\frac12\sinh\rho\Bigr)^2\varphi\right)\varphi^2
<0,
\]
a contradiction. Therefore,
\[
\varphi\le \max\left\{\max_{M_0}\varphi,\;A\right\}
\quad\text{on } M_t,\ t\in[0,t_1].
\]

Combining this with the upper bound $\chi\le \sinh(2\rho_+(0))$ in \eqref{equ-supplbd}, we obtain
\[
F=\varphi\Bigl(\chi-\frac12\sinh\rho\Bigr)
\le \varphi\,\chi
\le \sinh(2\rho_+(0))
\max\left\{\max_{M_0}\varphi,\;A\right\},
\]
which proves \eqref{equ-psiupp}.
\end{proof}

We now complete the proof of Proposition \ref{lem-point}.
\begin{proof}[Proof of Proposition \ref{lem-point}]
The preceeding lemma implies that a smooth solution of the flow \eqref{equ-f} exists as long as the evolving domain encloses a non-zero volume. In fact, observing that $\kappa_1\leq F$, the upper bound of $F$ implies an upper bound on $\kappa_1$. This together with the pinching estimate \eqref{equ-pinching} and the lower bound \eqref{equ-k1low} for $\kappa_1$ implies  two-sided positive bounds for all the principal curvatures of $M_t$ for $t\in [0,t_1]$. Consequently, the coefficients $\dot{F}^{ij}$ appearing in the second-order part of the evolution equation \eqref{equ-psi} have eigenvalues bounded above and below by positive constants, so the flow \eqref{equ-f} remains uniformly parabolic. 
	Since the functions $F$ we considered are inverse concave (Lemma \ref{lem:inverse-concave}), we can apply an argument similar to that in \cite[Section 5]{AW18} to derive higher regularity estimates. Hence the solution can be extended beyond time $t_1$.  Therefore the inner radius $\rho_-(t)\to 0$ as $t$ approaches the maximal existence time $T$. By estimate \eqref{equ-rhopin}, the outer radius $\rho_+(t)$ also tends to 0 as $t\to T$. In summary, the flow \eqref{equ-f} remains smooth and contracts to a point as $t\to T$.
\end{proof}

\subsection{Roundness estimate}\label{sec6}
To study the asymptotic behavior of the flow as $t\to T$, we consider a rescaled solution of \eqref{equ-f} centered at the final point, and show that the rescaled hypersurfaces converge to a geodesic sphere. The main idea is to derive a roundness estimate by means of the Stampacchia iteration, following Huisken \cite{Hui86} for the mean curvature flow in Euclidean space, Gerhardt \cite{Ger15} for contracting flows in the sphere, and Yu \cite{Yu16} for flows of $h$-convex hypersurfaces in hyperbolic space.

\begin{proposition}
	\label{prop-roundness}
	Let $F$ satisfy Assumption \ref{assumption}. If, in addition, $F$ is strictly concave or $F=\frac{1}{n}H$, then there exist constants $\delta>0$ and $c_0>0$, depending only on $n$ and $M_0$, such that
	\begin{equation}
		|A|^2 - nF^2 \leq c_0 F^{2-\delta}
		\label{equ-roundness}
	\end{equation}
    on $M_t$ for all $t\in [0,T)$.
\end{proposition}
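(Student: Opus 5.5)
We follow the Stampacchia iteration scheme of Huisken \cite{Hui86}, as adapted by Gerhardt \cite{Ger15} and Yu \cite{Yu16}. Set
\[
f_\sigma=\frac{|A|^2-nF^2}{F^{2-\sigma}} .
\]
The goal is to show that $f_\sigma$ is bounded for some small $\sigma>0$. By Proposition \ref{s4.lem-pinc} and the lower bound \eqref{equ-k1low}, all principal curvatures are comparable to $F$, and $F$ is bounded below on $[0,T)$.

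\emph{Evolution equation.} First compute the evolution of $f_\sigma$ from \eqref{s2.evlh2} and \eqref{equ-psi}. The result should have the form
\[
\partial_t f_\sigma\le \dot F^{k\ell}\nabla_k\nabla_\ell f_\sigma+\frac{C}{F}\,|\dot F^{k\ell}\nabla_k f_\sigma\nabla_\ell F|-\frac{c\,|\nabla A|^2}{F^{2-\sigma}}+\sigma |A|^2 f_\sigma+C f_\sigma .
\]
The key inputs are the following.
\begin{itemize}
\item The good gradient term $-2\dot F^{k\ell}\nabla_k h_{ij}\nabla_\ell h^{ij}$ is bounded below by $c|\nabla A|^2$, using the pinching estimate.
\item The term $\ddot F$ has the right sign. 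If $F$ is concave this is immediate. For strict concavity (or $F=\tfrac1nH$, where $\ddot F=0$), combine it with the gradient contribution of $F^{\sigma-2}$ and a Kato-type inequality to get $|\nabla A|^2$ control.
\item The reaction terms. In Euclidean space the homogeneous terms cancel exactly. In $\mathbb{H}^{n+1}$ the extra terms coming from $g_{ij}$ in \eqref{s2.evlh2} produce $-\dot F^{k\ell}g_{k\ell}(|A|^2-nF^2)$-type contributions. These are bounded by $Cf_\sigma$ because $\dot F^{k\ell}g_{k\ell}$ is bounded.
\end{itemize}

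\emph{Poincar\'e-type inequality.} Next, derive the inequality that converts the bad term $\sigma|A|^2 f_\sigma$ into gradient terms. Apply the Simons-type identity for $\dot F^{k\ell}\nabla_k\nabla_\ell h_{ij}$ (commuting derivatives via Gauss--Codazzi in hyperbolic space) and contract with $h^{ij}$ minus a multiple of $Fg^{ij}$. This yields
\[
Z:=\dot F^{k\ell}h_{k\ell}(h^2)_{ij}h^{ij}-\dot F^{k\ell}(h^2)_{k\ell}|A|^2\cdots\ \ge\ c\,F^2(|A|^2-nF^2),
\]
where strict concavity or $F=\tfrac1nH$ is exactly what makes $Z$ coercive against umbilicity. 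The pinching estimate makes the constant $c$ uniform. Integrating by parts against $f_\sigma^{p-1}$ then gives
\[
\int f_\sigma^p F^2\le C(p+1)\eta^{-1}\int \frac{f_\sigma^{p-1}|\nabla A|^2}{F^{2-\sigma}}+C\eta\int f_\sigma^{p}\frac{|\nabla A|^2}{F^{2}}\cdots + C\int f_\sigma^p .
\]
Curvature terms from the ambient space are lower order, since $F$ is bounded below.

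\emph{$L^p$ bounds and iteration.} Combining the two steps gives
\[
\frac{d}{dt}\int f_\sigma^p\le C\int f_\sigma^p
\]
for $p$ large and $\sigma\le c/\sqrt p$. Since $T<\infty$, this bounds $\|f_\sigma\|_{L^p}$ on $[0,T)$. For $k\ge k_0$, consider the truncations $(f_\sigma-k)_+$. Use the Michael--Simon Sobolev inequality in $\mathbb{H}^{n+1}$ (the Hoffman--Spruck version, with the mean curvature term absorbed using $H\le CF$) and the fact that $|M_t|$ is bounded. Stampacchia's lemma then gives $f_\sigma\le C$, which is \eqref{equ-roundness} with $\delta=\sigma$.

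\emph{Main obstacle.} The main obstacle is the coercivity estimate for $Z$, that is, showing that the reaction term in the Simons identity controls $F^2(|A|^2-nF^2)$ in hyperbolic space. This needs strict concavity, and the signs of the additional $-\dot F^{k\ell}g_{k\ell}$ and $-2F g_{ij}$ terms must be checked so that they are absorbed, using pinching and $F\ge c>0$. I would first follow Yu's computation in \cite{Yu16}, replacing $h$-convexity by the two-sided curvature bounds $\kappa_n\le C\kappa_1$ and $\kappa_1\ge C(n,M_0)$ from the previous section.
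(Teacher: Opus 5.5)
Your proposal follows the same route as the paper, which only sketches this step and refers to Huisken, Gerhardt and Yu: you obtain $L^p$ bounds for $\psi_\sigma=(|A|^2-nF^2)F^{\sigma-2}$ with $p$ large and $\sigma\lesssim p^{-1/2}$, then run Stampacchia iteration, with the pinching estimate \eqref{equ-pinching} making the constants uniform and strict concavity (or $F=\frac1nH$) supplying the good gradient terms. One justification needs correcting: the hyperbolic zero-order terms in $\partial_t\psi_\sigma$ are $(4-\sigma)\dot F^{k\ell}g_{k\ell}\,\psi_\sigma+4F^{\sigma-1}\bigl(nF\dot F^{k\ell}g_{k\ell}-H\bigr)$, and the last term is not a multiple of $|A|^2-nF^2$, so boundedness of $\dot F^{k\ell}g_{k\ell}$ alone does not suffice; it is bounded by $C\psi_\sigma$ because $nF\dot F^{k\ell}g_{k\ell}-H$ vanishes to second order at umbilic points, so on the pinched cone its absolute value is at most $C|\mathring{A}|^2/F\le C(|A|^2-nF^2)/F$, using $F\le H/n$ from concavity (this is the ``quadratic terms controlled by $\psi_\sigma$'' the paper mentions, and the same observation handles the ambient curvature terms in your $Z$).
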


For the mean curvature flow case $F=\frac{1}{n}H$, the argument can be found in Huisken \cite{Hui86,Hui8602,Hui87}. When $F$ is strictly concave, once the curvature pinching estimate \eqref{equ-pinching} has been established, the roundness estimate \eqref{equ-roundness} can be derived by an argument similar to those in Gerhardt \cite{Ger15} and Yu \cite{Yu16}. The key quantity is
\begin{equation*}
	\psi_{\sigma}=\dfrac{|A|^2-nF^2}{F^{2-\sigma}},
\end{equation*}
where $\sigma\in(0,1)$, and one estimates the $L^p$-norm of $\psi_\sigma$ for sufficiently large $p$. The curvature pinching estimate \eqref{equ-pinching} is used to control certain quadratic terms by $\psi_\sigma$, while the strict concavity of $F$ allows one to extract favorable negative terms, namely (see \cite[Lemma 7.12]{And94})
\[
\ddot{F}^{k\ell,rs}\nabla_i h_{k\ell}\nabla_j h_{rs} \leq -c |A|^{-1}|\nabla A|^2g_{ij}.
\]
We refer the reader to \cite{Ger15,Yu16} for the details.

\subsection{Convergence of the rescaled flow}

With the roundness estimate in hand, we can rescale the flow with respect to the final point and show that it converges to a geodesic sphere exponentially. 

If the initial hypersurface is a geodesic sphere in $\mathbb{H}^{n+1}$, then the evolving hypersurfaces under the flow \eqref{equ-f} remain geodesic spheres with the same center and with radius $\Theta(t,T)$ satisfying
\[
\frac{\mathrm{d}}{\mathrm{d}t}\Theta(t,T) = -\coth\Theta.
\]
Such a spherical solution contracts to a point in finite time. We choose the initial sphere so that its maximal existence time coincides with the maximal existence time $T$ in Proposition \ref{lem-point}.

For a general solution, the comparison principle implies that $\Theta(t,T)$ controls the size of $M_t$. Writing $M_t$ as a radial graph $u(x,t)$ in geodesic polar coordinates centered at the final point, we have
\[
\inf_{M_t} u(\cdot,t) \le \Theta(t,T) \le \sup_{M_t} u(\cdot,t).
\]
To study the limit shape, we introduce the rescaled time
\begin{equation*}
	\tau = -\log\Theta(t,T),\qquad \tau\in[0,\infty),
\end{equation*}
and the rescaled quantities $\tilde{F}= \Theta F$ and $\tilde{\kappa}_i = \Theta\kappa_i.$ Since $\Theta(t,T)\to 0$ as $t\to T$,  $\tau$ increases from $0$ to $\infty$. Because of the pinching estimate \eqref{equ-rhopin}, we may restrict ourselves to the interval $[t_0,T)$ where $\Theta$ is already small; set $\tau_0 = -\log\Theta(t_0,T)$.

The rescaled function $\tilde{u}(x,\tau)=u/\Theta$ satisfies a parabolic equation:
\begin{align*}
	\dfrac{\partial}{\partial\tau}\tilde{u} =& -\tanh(\Theta)vF+ \tilde{u} \notag\\
	=& -\Theta^{-1}\tanh(\Theta)v \tilde{F} + \tilde{u},
\end{align*}
with $v^2 = 1 + {|\overline{\nabla}u|_{g_{\mathbb{S}^n}}^2}/{\sinh^2 u}.$ Using the roundness estimate \eqref{equ-roundness} and a similar argument as in \cite[\S 7-8]{Ger15} and \cite[\S 7-8]{Yu16}, we can establish uniform $C^\infty$ estimates for $\tilde{u}(x,\tau)$ on $(x,\tau)\in \mathbb{S}^n \times [\tau_0,\infty)$, and show that $\tilde{u}$ converges to $1$ as $\tau\to\infty$ exponentially in $C^\infty(\mathbb{S}^n)$. We omit the details. 

This completes the proof of Theorem \ref{thm-main}.

\section*{Acknowledgements}
The research was supported by National Key Research and Development Program of China 2021YFA1001800, National Natural Science Foundation of China No. 12531002, the Fundamental Research Funds for the Central Universities. The third author was also supported by the China Postdoctoral Science Foundation under Grant Number 2025M783146.



\end{document}